\documentclass[12pt,a4paper]{amsart}
\usepackage{amsthm}
\usepackage{graphicx}
\usepackage{tikz}
\usepackage{caption}
\usepackage{subcaption}
\usepackage{lipsum}
\theoremstyle{definition}
\newtheorem{definition}{Definition}[section]

\newtheorem{remark}[definition]{Remark}

\theoremstyle{plain}
\newtheorem{theorem}[definition]{Theorem}
\newtheorem{proposition}[definition]{Proposition}
\newtheorem{lemma}[definition]{Lemma}
\newtheorem{corollary}[definition]{Corollary}

\usepackage{geometry}
\usepackage[dvipsnames]{xcolor}
\usepackage{adjustbox}

\def \G {\mathcal{G}}

\usepackage{amsthm}
\usepackage{mathtools}
\usepackage{leftindex}
\usepackage{comment}
\usepackage{bm}

\usepackage{lineno}
\usepackage{mathtools}
\mathtoolsset{showonlyrefs}

\definecolor{cornellred}{rgb}{0.7, 0.11, 0.11}

\definecolor{Royalbluee}{rgb}{0.25,0.41,0.88}
\newcommand{\defeq}{\vcentcolon=}

\DeclareMathOperator{\sech}{sech}

\usepackage[colorlinks=true,urlcolor=Royalbluee,citecolor=Royalbluee,linkcolor=Royalbluee,linktocpage,pdfpagelabels,
bookmarksnumbered,bookmarksopen]{hyperref}

\numberwithin{equation}{section}

\begin{document}

\title[NLSE with Magnetic Potential on Metric Graphs]{Nonlinear Schr\"odinger Equation \\ with Magnetic Potential on Metric Graphs}

\author[N.\,Cangiotti]{Nicol\`o Cangiotti}
\author[I.\,Gallo]{Ivan Gallo}
\author[D.\,Spitzkopf]{David Spitzkopf}

\address[N.\,Cangiotti]{Department of Mathematics\newline\indent Politecnico di Milano \newline\indent
via Bonardi 9, Campus Leonardo, 20133 Milan, Italy}
\email{nicolo.cangiotti@polimi.it}

\address[I.\,Gallo]{Dipartimento di Scienze Matematiche ``G.L. Lagrange'',\newline\indent Politecnico di Torino \newline\indent
Corso Duca degli Abruzzi, 24, 10129 Torino, Italy}
\email{ivan.gallo@polito.it}
\address[D.\,Spitzkopf]{Department of Theoretical Physics, NPI, Academy of Sciences, \newline\indent 25068 \v{R}e\v{z} near Prague, Czechia and Faculty of Mathematics and
Physics, \newline\indent Charles University, V Hole\v{s}ovi\v{c}k\'ach 2, 18000 Prague, Czechia}
\email{spitzkopf@ujf.cas.cz}

\subjclass[2020]{35J10, 35Q55, 35R02, 49J35}

\keywords{Nonlinear Schrödinger equation, magnetic potentials, metric graphs, ground states}

\begin{abstract}
In this manuscript, we shall investigate the Nonlinear Magnetic Schrödinger Equation on noncompact metric graphs, focusing on the existence of ground states. We prove that the magnetic Hamiltonian is variationally equivalent to a non-magnetic operator with additional repulsive potentials supported on the graph's cycles. This effective potential is strictly determined by the Aharonov-Bohm flux through the topological loops. Leveraging this reduction, we extend classical existence criteria to the magnetic setting. As a key application, we characterize the ground state structure on the tadpole graph, revealing a mass-dependent phase transition. The ground states exist for sufficiently small repulsion in an intermediate regime of masses while sufficiently strong flux prevents the formation of ground states. 
\end{abstract}

\maketitle

\section{Introduction}

The Nonlinear Schrödinger Equation (NLSE) on metric graphs has attracted significant attention in recent years as a fundamental model in mathematical physics and applied analysis \cite{Kairzhan2022, Noja2014}. This framework provides a powerful setting to investigate nonlinear dynamics in complicated geometries, motivated by the study of Bose--Einstein condensates confined in ramified traps \cite{Amico2021} and wave propagation in optical networks. These models capture phenomena such as branching, interference, and localization that have no direct analogue in Euclidean settings.

While the focusing NLSE on noncompact graphs has been extensively studied, particularly regarding the existence and stability of ground states \cite{Adami2012, Adami2015, Adami2016, Adami2017, Adami2019}, most existing results involve non-magnetic operators. In this manuscript, we extend this theory to the \emph{Magnetic} Nonlinear Schrödinger Equation:
\begin{equation}
    -\left(\frac{\textrm{d}}{\textrm{d}x} - iA\right)^2 u - |u|^{p-2}u = \lambda u\,,
\end{equation}
where $A(x)$ is a magnetic vector potential representing an external field, $2<p<6$ and Kirchhoff magnetic boundary conditions are imposed at vertices.
The introduction of a magnetic field generates rich physical phenomena and is motivated by the need to model realistic quantum transport in networks subject to external fields. As well known from the study of linear quantum graphs \cite{ExnerKovarik2015,ExnerSeba1989, KostrykinSchrader2003}, magnetic potentials on networks manifest through the \emph{Aharonov-Bohm effect} \cite{aharonov1959significance}, a purely quantum phenomenon in which electromagnetic potentials influence the quantum phase of a charged particle even in regions where the magnetic field vanishes. On a metric graph, the magnetic field can always be removed by a gauge transformation, so it is topologically trivial, i.e. locally, it has no physical effect. However, in the presence of a cycle, the magnetic potential produces a non-removable phase shift in the wavefunction. This phase shift acts as an effective repulsive mechanism that competes with the focusing nonlinearity.

The primary goal of this work is to characterize the existence of ground states (i.e. global minimizers of the energy) for the magnetic NLSE.  This study builds on the established theoretical framework for Magnetic Nonlinear Schrödinger equations \cite{CazenaveEsteban1988,EstebanLions1989,StubbeVazquez1989a,StubbeVazquez1989b}, which encompasses both the foundational analysis of magnetic effects and the concentration‑compactness approach to existence and stability of stationary states. 
Our contributions are twofold.
We prove that the magnetic NLSE on a metric graph is variationally equivalent to a standard (non-magnetic) NLSE with an additional repulsive potential supported solely on the cycles of the graph. We derive an explicit formula for this effective potential, $\Phi_\gamma(A)$, which depends on the distance between the magnetic flux and the nearest integer winding number. This reduction allows us to extend classical existence criteria \cite{Adami2015} to the magnetic setting using concentration-compactness arguments.

As a concrete application, we analyze the \textit{tadpole} graph (a ring attached to a half-line). The graph naturally emerged as a simplified model for systems such as metallic rings coupled to external electron baths via a single conducting \cite{Exner97}. With Kirchhoff conditions, it satisfies the existence criterion of a ground state for the standard nonlinear setting \cite{Adami2016}. Previous results on tadpole include the full classification of cubic solitons ($p=4$) on the tadpole graph \cite{NCF15}, the analysis of edge bifurcations for general nonlinearities \cite{NPS2015}, and the study of ground states in the presence of repulsive delta‑type vertex conditions \cite{DDL2025}.


In this work we identify an existence regime for ground states, providing a geometric condition in the case $p=4$ on the underlying graph that guarantees their occurrence. In addition, we establish a nonexistence theorem showing that ground states cannot form when the magnetic flux is noninteger and sufficiently strong.

\subsection*{Organization}
The paper is organized as follows. In Section \ref{Sec2}, we introduce the variational framework and the magnetic Sobolev spaces. Section \ref{Sec3} recalls necessary preliminaries, including Gagliardo--Nirenberg inequalities. In Section \ref{Sec4}, we prove the equivalence between the magnetic problem and the effective potential formulation. Section \ref{Sec5} establishes the general existence theory for ground states. Finally, in Section \ref{Sec6}, we perform the detailed analysis of the tadpole graph, providing explicit existence and non-existence results.

\section{The Magnetic NLSE framework}
\label{Sec2}

In this section, we formulate the NLSE on a metric graph $\mathcal{G}$ in the presence of a magnetic field. 
On a one-dimensional structure, the magnetic potential is described by a collection of real-valued functions $A = (A_e)_{e \in \mathbb{E}_\mathcal{G}}$ defined on each edge.

The stationary Magnetic Nonlinear Schrödinger Equation on the graph is given by:
\begin{equation}
    - \left( \frac{\textrm{d}}{\textrm{d}x} - i A(x) \right)^2 u  - |u|^{p-2}u = \lambda u\,,
\end{equation}
where the derivative $\frac{\textrm{d}}{\textrm{d}x}$ is replaced by the covariant derivative
\begin{equation}
    D=\frac{\textrm{d}}{\textrm{d}x}-iA\,.
\end{equation}
The associated energy functional is
\begin{equation} \label{functional}
    E_A(u, \G) = \frac{1}{2}\int_{\mathcal{G}} |Du|^2 \, \textrm{d}x - \frac{1}{p}\int_{\G} |u|^p \, \textrm{d}x\,,
\end{equation}
where $\G$ is assumed to have at least one unbounded edge. 
We seek ground states of this energy under the mass constraint
\begin{equation}
    \|u\|^2_{L^2(\G)} = \mu\,.
\end{equation}
The natural domain for this problem is the magnetic Sobolev space 
$$
H^1_A(\G) \defeq \{ u \in L^2(\G) : Du \in L^2(\G) \},
$$
and the minimization is performed over the space:
\begin{equation}
    H^1_{A,\mu}(\G) = \left\lbrace u \in H^1_A(\G) : \|u\|^2_{L^2(\G)} = \mu \right\rbrace.
\end{equation}
Consequently, the standard Kirchhoff vertex conditions are modified to preserve self-adjointness. A function $u$ in the domain of the operator must satisfy:
\begin{equation}
    \label{bc}
    \begin{cases}
        u(x) \ \text{is continuous at every vertex } v \in \mathbb{V}_\mathcal{G}\,; \\
        \sum_{e \succ v} \frac{\textrm{d} u_e}{\textrm{d}x_e}(v) - i A_e(v) u(v) = 0 \quad \forall v \in \mathbb{V}_\mathcal{G}\,,
    \end{cases}   
\end{equation}
where the sum runs over all edges $e$ incident to the vertex $v$, and derivatives are taken in the outgoing direction (see, for instance, \cite{Berkolaiko2013}).
\section{Preliminaries and Functional Framework}
\label{Sec3}
In this section, we establish the functional framework for the analysis. We first recall the non-magnetic variational framework, which will serve as the reference problem after reducing the magnetic functional to an effective potential formulation. The standard definition of a metric graph $\mathcal{G}$ as
\[
\mathcal{G}=(\mathbb{V}_\mathcal{G},\mathbb{E}_\mathcal{G}) ,
\]
where the structure consists of:
\begin{itemize}
    \item[(i)] A set of vertices $\mathbb{V}_\mathcal{G}$;
    \item[(ii)] A set of edges $\mathbb{E}_\mathcal{G}$, where elements are defined as pairs $e \defeq (v_1,v_2)$ with $v_1,v_2\in\mathbb{V}_\mathcal{G}$;
    \item[(iii)] A metric on every edge defined by the coordinate map:
    \[
    x_e: I_e \longrightarrow e, \quad \text{with } I_e \defeq [0,l_e].
    \]
\end{itemize}
A function $u:\mathcal{G}\to\mathbb{C}$ is defined as the collection of its restrictions on the edges:
\[
u \defeq (u_e)_{e\in\mathbb{E}_\mathcal{G}},
\]
where each $u_e$ is a one-dimensional function on $I_e$. The natural functional setting is given by the following spaces:

\begin{align*}
    L^p(\mathcal{G}) \defeq & \left\lbrace u:\mathcal{G}\to\mathbb{C} : \|u\|_p^p \defeq \sum_{e\in\mathbb{E}_\mathcal{G}}\int_{I_e}|u_e|^p\, \mathrm{d} x<\infty \right\rbrace;\\
      H^1(\mathcal{G}) \defeq & \left\lbrace u:\mathcal{G}\to\mathbb{C} : u \text{ is cont. on } \mathcal{G}, \text{ and }  \|u\|^2_{H^1} \defeq \sum_{e\in\mathbb{E}_\mathcal{G}}\int_{I_e} \left( |u_e|^2+|u'_e|^2 \right) \,\mathrm{d} x<\infty \right\rbrace\,.
\end{align*}

To define the operator domain, one must fix vertex conditions. The most common choice is the \emph{Kirchhoff condition}:
\begin{equation}
    \begin{cases}
        u \text{ is continuous on } \mathcal{G}\,;\\
        \sum_{e\succ v}\frac{d u_e}{d x_e}(v)=0 \quad \forall v\in\mathbb{V}_\mathcal{G}.
    \end{cases}
    \label{kirchhoff}
\end{equation}
Here, the notation $e\succ v$ indicates that the edge $e$ is incident to the vertex $v$, and the derivative is taken in the direction pointing away from the vertex. 

We are interested in the critical points of the \emph{energy functional} $E: H^1(\mathcal{G}) \to \mathbb{R}$ defined by:
\begin{equation}
E(u,\mathcal{G}) \defeq \frac{1}{2}\|u'\|_2^2 - \frac{1}{p}\|u\|_p^p\,,
\end{equation}
subject to the constraint that $u$ belongs to the space:
\[
H^1_{\mu}(\mathcal{G}) \defeq \left\{u\in H^1(\mathcal{G}): \|u\|_2^2=\mu\right\}.
\]
In the literature, the value $\mu$ is referred to as the \emph{mass}. The main goal of this framework is to solve the minimization problem:
\begin{equation}
    \mathcal{E}_{\mathcal{G}}(\mu) \defeq \inf_{u\in H^1_{\mu}(\mathcal{G})} E(u,\mathcal{G}).
\end{equation}
We say that a function $u\in H^1_{\mu}(\mathcal{G})$ is a \emph{ground state} of mass $\mu$ if $E(u,\mathcal{G})=\mathcal{E}_\mathcal{G}(\mu)$.
The starting point of the analysis of the NLSE on metric graphs is the existence of ground states at every mass for the functional $E(\cdot , \mathbb{R})$, given by the \emph{soliton}
\begin{equation}
    \phi_{\mu}(x) = \mu^{\alpha} C_p \, {\sech}^{\frac{\alpha}{\beta}}(c_p \mu^{\beta} x)
    \label{soliton} ,
\end{equation}
where $c_p$ and $C_p$ are positive constants and the exponents $\alpha$ and $\beta$ are given by
\begin{equation}
    \label{alfabeta}
    \alpha := \frac{2}{6-p}, \qquad
    \beta := \frac{p-2}{6-p}.
\end{equation}
Up to translations and multiplication by a constant phase,
the function $\phi_\mu$ is the unique
ground state at mass $\mu$ of $E$ on the line
\cite{Berestycki1983,Zakharov1972}. 

A preliminary requirement is to prove that $\mathcal{E}_{\mathcal{G}}$ is bounded from below. This depends crucially on the nonlinear power $p$. The argument relies on the \emph{Gagliardo-Nirenberg inequalities}, stated in the following proposition.
\begin{proposition}[Gagliardo-Nirenberg]
    Let $\G$ be a noncompact metric graph and let $p>2$. Then there exists a constant $K_{\G,p}>0$ such that for all $u \in H^1(\G)$:
        \begin{align}
        \tag{$\text{GN}_{\text{p}}$}
        \label{GNp}
            \|u\|^p_p &\le K_{\G,p} \, \|u\|_2^{\frac{p}{2}+1} \, \|u'\|_2^{\frac{p}{2}-1}\,,\\
        \tag{$\text{GN}_\infty$}
        \label{GNinfty}
            \|u\|_\infty &\le K_{\G,\infty} \, \|u\|_2^{\frac{1}{2}} \, \|u'\|_2^{\frac{1}{2}}\,.
        \end{align}
\end{proposition}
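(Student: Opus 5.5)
The plan is to prove \eqref{GNinfty} first and then obtain \eqref{GNp} by interpolation. The central tool is the one-dimensional identity $|u(x)|^2 = -\int_x^{\infty} \frac{\mathrm{d}}{\mathrm{d}t}|u(t)|^2\,\mathrm{d}t$. This identity holds along a half-line, since functions in $H^1$ of a half-line vanish at infinity. Because $\G$ is noncompact, it contains at least one unbounded edge. We assume, as is standard, that $\G$ has finitely many edges.

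\textbf{Step 1: the $L^\infty$ bound.} Fix a point $x\in\G$. Since $\G$ is connected and has a half-line $\mathcal{H}$, we can build a path $\Gamma_x\subset\G$ starting at $x$ and ending along $\mathcal{H}$ to infinity. We take $\Gamma_x$ without self-intersections, so each edge is traversed at most once. Parametrising $\Gamma_x$ by arclength $t\in[0,\infty)$, the restriction $w(t)=u(\Gamma_x(t))$ lies in $H^1(0,\infty)$, by continuity of $u$ at the vertices. Moreover $\|w\|_{L^2}\le\|u\|_2$ and $\|w'\|_{L^2}\le\|u'\|_2$. Then
\[
|u(x)|^2=|w(0)|^2\le 2\int_0^\infty |w||w'|\,\mathrm{d}t\le 2\|u\|_2\|u'\|_2 ,
\]
by Cauchy--Schwarz. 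Taking the supremum over $x$ gives \eqref{GNinfty} with $K_{\G,\infty}=\sqrt2$. The constant does not even depend on $\G$.

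\textbf{Step 2: the $L^p$ bound.} For $p>2$ we interpolate:
\[
\|u\|_p^p\le \|u\|_\infty^{p-2}\|u\|_2^2\le K_{\G,\infty}^{p-2}\,\|u\|_2^{\frac{p-2}{2}+2}\,\|u'\|_2^{\frac{p-2}{2}} .
\]
The exponents are $\frac{p}{2}+1$ and $\frac{p}{2}-1$, which is exactly \eqref{GNp} with $K_{\G,p}=2^{\frac{p-2}{2}}$.

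\textbf{Main obstacle.} The only delicate point is Step 1. We must justify the existence of the path $\Gamma_x$ and check that restricting $u$ to it gives an $H^1$ function on a half-line. Existence follows from connectedness together with the presence of an unbounded edge. Choosing a simple path guarantees that the $L^2$ norms of $w$ and $w'$ are dominated by those of $u$, with no edge counted twice. The limit $w(t)\to0$ as $t\to\infty$ is the standard fact for $H^1(0,\infty)$, obtained by density of compactly supported functions.

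Noncompactness is essential here. On a compact graph, constants violate \eqref{GNinfty}, since they have $u'=0$ but $u\neq0$, so the half-line cannot be dispensed with.
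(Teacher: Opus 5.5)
Your argument is correct and complete. The paper gives no proof of this proposition and only refers to the literature (Adami et al.). A standard route there transfers the half-line inequalities to $\G$ by decreasing rearrangement. Because $\G$ is noncompact and connected, every level in $(0,\|u\|_\infty)$ of $|u|$ is attained at least once. Hence the decreasing rearrangement $u^*$ of $|u|$ on $\mathbb{R}^+$ preserves all $L^q$ norms and satisfies the P\'olya--Szeg\H{o} inequality $\|(u^*)'\|_{L^2(\mathbb{R}^+)}\le\|u'\|_{L^2(\G)}$.

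Your simple-path argument avoids rearrangements. It uses only the fundamental theorem of calculus along one path to infinity, and its one delicate point is choosing a path that crosses each edge at most once, which you handle. What it buys is explicit constants independent of $\G$. Your $K_{\G,\infty}=\sqrt2$ is already optimal on $\mathbb{R}^+$, with equality for $u=e^{-x}$, so nothing is lost there. Only the interpolation step $\|u\|_p^p\le\|u\|_\infty^{p-2}\|u\|_2^2$ makes $K_{\G,p}=2^{(p-2)/2}$ non-sharp.

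The rearrangement route gives more: the best constant on $\G$ is bounded by the best constant on $\mathbb{R}^+$ for every $p$. It is also the same device the literature reuses for the half-line energy bound that the paper quotes later. Since the statement only asks for some positive constant, your proof suffices.
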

The proof of this proposition is standard in literature (see, e.g., \cite{Adami2012}). 

A direct consequence (see Corollary 3.4 in \cite{Adami2016}) is the following sufficient condition for the existence of ground states on graphs with infinite edges.

\begin{corollary}
Fix $2 < p < 6$ and let $\mathcal{G}$ be a noncompact graph. If there exists a function $v \in H^1_{\mu}(\mathcal{G})$ such that
\begin{equation}
  E(v, \mathcal{G})  < E(\phi_{\mu}, \mathbb{R}) \equiv \inf_{u \in H^1_\mu(\mathbb{R})} E(u, \G)\,,
  \label{criterion1}
\end{equation}
then the functional $E(\cdot, \G)$ admits a ground state at mass $\mu$.
\end{corollary}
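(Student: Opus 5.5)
The plan is the direct method, with concentration-compactness used to rule out loss of mass at infinity. Let $(u_n)\subset H^1_\mu(\G)$ be a minimizing sequence for $\mathcal{E}_\G(\mu)$. Replacing $u_n$ by $|u_n|$ does not increase the energy, so we may take $u_n\ge 0$.

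\textbf{Step 1: boundedness.} By \eqref{GNp} with $2<p<6$ we have $\frac{p}{2}-1<2$. Hence
\[
E(u,\G)\ge \tfrac12\|u'\|_2^2-\tfrac{K_{\G,p}}{p}\mu^{\frac{p+2}{4}}\|u'\|_2^{\frac{p}{2}-1},
\]
which is coercive in $\|u'\|_2$. Two consequences follow: $\mathcal{E}_\G(\mu)>-\infty$, and $(u_n)$ is bounded in $H^1(\G)$. Up to a subsequence, $u_n\rightharpoonup u$ weakly in $H^1$ and $u_n\to u$ in $L^\infty_{\rm loc}$, by compact embedding on each bounded edge and on bounded pieces of the half-lines. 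Set $m=\|u\|_2^2\le\mu$.

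\textbf{Step 2: excluding vanishing ($m=0$).} Every noncompact graph satisfies $\mathcal{E}_\G(\mu)\le E(\phi_\mu,\mathbb{R})$: take a soliton truncated far out on a half-line and renormalize the mass. The hypothesis \eqref{criterion1} gives the strict inequality $\mathcal{E}_\G(\mu)<E(\phi_\mu,\mathbb{R})<0$. Suppose $m=0$. Then $u_n\to0$ uniformly on the compact core, and the mass escapes along the half-lines. Cut off each $u_n$ near the core at small cost, which produces functions on a finite number of half-lines. Gluing each half-line to its reflection gives even functions on $\mathbb{R}$, or we compare directly with the half-line problem. This shows
\[
\liminf_n E(u_n,\G)\ge E(\phi_\mu,\mathbb{R}),
\]
contradicting the strict inequality. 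This comparison is the key technical point, and the cutoff errors vanish because $\|u_n\|_{L^\infty(\text{core})}\to0$.

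\textbf{Step 3: excluding dichotomy ($0<m<\mu$).} By Brezis--Lieb, with $w_n=u_n-u$,
\[
\|u_n\|_p^p=\|u\|_p^p+\|w_n\|_p^p+o(1),\qquad \|u_n'\|_2^2=\|u'\|_2^2+\|w_n'\|_2^2+o(1),
\]
and $\|w_n\|_2^2\to\mu-m$. Rescale the mass via $v=\sqrt{\mu/\|v\|_2^2}\,v$. The factor $\theta=\mu/\|v\|_2^2>1$ scales the kinetic term by $\theta$ and the $L^p$ term by $\theta^{p/2}>\theta$. For functions of negative energy this yields $E(\sqrt\theta\,v)<\theta E(v)$, i.e. the strict subadditivity $\mathcal{E}_\G(\mu)<\frac{m}{\mu}\mathcal{E}_\G(\mu)+\frac{\mu-m}{\mu}\mathcal{E}_\G(\mu)$ wherever the relevant energies are negative. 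The remainder $w_n$ vanishes locally, so by Step 2 its energy is asymptotically at least the line level, which is negative. Therefore
\[
\mathcal{E}_\G(\mu)=\lim_n E(u_n,\G)\ge E(u,\G)+\liminf_n E(w_n,\G),
\]
and the mass-scaling inequality turns this into $\mathcal{E}_\G(\mu)>\mathcal{E}_\G(\mu)$, a contradiction.

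\textbf{Step 4: conclusion.} Hence $m=\mu$, so $u_n\to u$ strongly in $L^2$. By \eqref{GNp} applied to $u_n-u$, the convergence also holds in $L^p$. Weak lower semicontinuity of $\|u'\|_2$ then gives $E(u,\G)\le\mathcal{E}_\G(\mu)$, so $u$ is a ground state.

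The main obstacle is Step 2, namely the precise lower bound for sequences escaping to infinity. I would handle it via the standard lemma of \cite{Adami2016}: if $u_n\rightharpoonup0$ in $H^1(\G)$, then $\liminf E(u_n,\G)\ge E(\phi_\mu,\mathbb{R})$. This uses that the rearrangement on a half-line extended symmetrically does not raise the energy.
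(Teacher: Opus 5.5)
Your architecture (bounded minimizing sequence of nonnegative functions, weak limit $u$, exclusion of vanishing by comparison with the line, exclusion of dichotomy by mass scaling, then strong convergence) is the concentration-compactness argument behind Corollary 3.4 of \cite{Adami2016}. The paper gives no proof of its own and simply cites that result. Steps 1, 3 and 4 are sound. In Step 3 you need neither a sign condition nor Step 2. For every $v\neq 0$ and $\theta>1$ one has $E(\sqrt{\theta}\,v,\G)<\theta E(v,\G)$, which gives $E(u,\G)>\frac{m}{\mu}\mathcal{E}_\G(\mu)$ and $\liminf_n E(w_n,\G)\ge\frac{\mu-m}{\mu}\mathcal{E}_\G(\mu)$. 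These two bounds already give the contradiction. The inequality you display as ``strict subadditivity'' literally reads $\mathcal{E}_\G(\mu)<\mathcal{E}_\G(\mu)$ and should be replaced by them.

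The genuine problem is the mechanism you give for Step 2, which you rightly call the crux. Write $E(\phi_m,\mathbb{R})=-\theta_p m^{2\beta+1}$. Gluing a half-line piece $w$ of mass $m_w$ to its own reflection gives an even function on $\mathbb{R}$ of mass $2m_w$ and energy $2E(w)$. So you only obtain $E(w)\ge -2^{2\beta}\theta_p m_w^{2\beta+1}=2^{2\beta}E(\phi_{m_w},\mathbb{R})$. This is the half-line level, strictly below the line level since $2^{2\beta}>1$, and it cannot contradict $\mathcal{E}_\G(\mu)<E(\phi_\mu,\mathbb{R})$. Comparing with the free-endpoint half-line problem, or rearranging decreasingly on the half-line and extending evenly, gives the same insufficient bound.

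What you must use is the Dirichlet information created by the cutoff. Each cut-off piece vanishes at the vertex end of its half-line, so it extends by zero to a function in $H^1(\mathbb{R})$ with the same mass and the same energy. Equivalently, almost every value in its range has at least two preimages, so its symmetric rearrangement on $\mathbb{R}$, at the same mass, does not increase the kinetic term. This gives $E\ge E(\phi_{m_i},\mathbb{R})$ for each piece of mass $m_i$. Pieces on different half-lines are combined by placing them disjointly on $\mathbb{R}$, or via $\sum_i m_i^{2\beta+1}\le(\sum_i m_i)^{2\beta+1}$. Since the cutoff changes the mass only by $o(1)$, continuity in $m$ yields $\liminf_n E(u_n,\G)\ge E(\phi_\mu,\mathbb{R})$. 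This correction is precisely the content of the lemma you quote from \cite{Adami2016}, and with it your proof is complete.
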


This criterion implies that it is sufficient to find a competitor function whose energy lies below the energy of the soliton on the line. Similar existence criteria have recently been extended to NLS problems with external potentials \cite{Adami2025}; we will adopt this strategy in Section \ref{Sec5} to handle the effective magnetic potential.

\bigskip

We conclude this section with the topological definitions required to describe the magnetic flux. We recall that a \emph{cycle} is a finite sequence of connected edges where the initial and final vertices coincide, no edge is repeated, and which cannot be contracted to a point.

\begin{definition}
    For a graph $\G = (\mathbb{V}_\G, \mathbb{E}_\G)$, the \emph{cyclomatic number} (or first Betti number) $\beta_{\G}$ is the minimum number of edges that must be removed from $\mathbb{E}_\G$ to transform $\G$ into a connected tree. For a connected graph, this is given by:
    \[
    \beta_{\G} = |\mathbb{E}_\G| - |\mathbb{V}_\G| +1 \,.
    \]
\end{definition}

\begin{definition}
    An \emph{independent cycle} is identified with a subset of edges $\gamma \subset \mathbb{E}_\G$ such that removing an edge from $\gamma$ decreases the cyclomatic number $\beta_\G$ exactly by one.
\end{definition}

\begin{remark}
In the following, the "number of cycles" contributing to the energy in Eq. \eqref{Ia} refers specifically to the $\beta_\G$ independent cycles. This is the standard viewpoint for describing the effect of magnetic fluxes on metric graphs \cite{Berkolaiko2013}.
\end{remark}

The length $|\gamma|$ of a cycle is defined as the sum of the metric lengths of its constituent edges:
\[
    |\gamma| = \sum_{e \in \gamma} |e|\,.
\]

\subsection{Magnetic Sobolev space}
Magnetic Sobolev space is endowed with the norm
\begin{equation}
    \|u\|_{H^1_A(\mathcal{G})}^2 \defeq \int_{\mathcal{G}} \big( |Du|^2 + |u|^2 \big)\,\mathrm{d}x .
\end{equation}
In general, a function $u \in H^1_A(\mathcal{G})$ need not belong to $H^1(\mathcal{G})$, since continuity at the vertices is not implied.
However, the modulus $|u|$ always belongs to $H^1(\mathcal{G})$.
This follows from the diamagnetic inequality, which holds edge-wise on metric graphs.
\begin{theorem}[Diamagnetic inequality on metric graphs]
\label{diam}
Let $A \in L^2_{\mathrm{loc}}(\mathcal{G})$ and let $u \in H^1_A(\mathcal{G})$.
Then $|u| \in H^1(\mathcal{G})$ and the pointwise inequality
\begin{equation}
    \big|(|u|)'(x)\big| \le |Du(x)|
\end{equation}
holds for almost every $x$ on each edge of $\mathcal{G}$. Moreover, equality holds almost everywhere on an edge if and only if
\begin{equation}
    Du = \frac{u}{|u|}\,(|u|)' \qquad \text{a.e. on that edge},
\end{equation}
where $\frac{u}{|u|}$ is defined wherever $u\neq 0$.
\end{theorem}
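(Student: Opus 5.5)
The argument works edge by edge, since both the statement and the space $H^1_A(\mathcal{G})$ are defined edgewise. The only global ingredient is continuity of $|u|$ at the vertices.

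\textbf{Step 1: gauge to a free derivative.} Fix an edge $e$ with coordinate on $I_e$ and set $\theta_e(x)=\int_0^x A_e(s)\,\mathrm{d}s$. Because $A\in L^2_{\mathrm{loc}}$, we have $A_e\in L^1$ on bounded subintervals, so $\theta_e$ is absolutely continuous on them. Define $w_e = e^{-i\theta_e}u_e$. Formally
\[
w_e' = e^{-i\theta_e}(u_e' - iA_e u_e) = e^{-i\theta_e} Du_e,
\]
so $|w_e'|=|Du_e|$ and $|w_e|=|u_e|$. To make this rigorous, I interpret $Du\in L^2$ distributionally and check that $w_e\in L^2$ has weak derivative $e^{-i\theta_e}Du_e\in L^2$. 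For this I test against $\varphi\in C_c^\infty(I_e)$, use the product rule for $W^{1,1}_{\mathrm{loc}}$ functions, and note that $u_e\in W^{1,1}_{\mathrm{loc}}$ because $u_e' = Du_e + iA_eu_e\in L^1_{\mathrm{loc}}$. Hence $w_e\in H^1(I_e)$ on each bounded subinterval, and globally $w_e\in L^2$ with $w_e'\in L^2$.

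\textbf{Step 2: the scalar inequality.} For $w\in H^1$ it is classical that $|w|\in H^1$ with
\[
(|w|)' = \mathrm{Re}\!\left(\frac{\bar w}{|w|}\,w'\right)\ \text{on } \{w\neq0\}, \qquad (|w|)'=0 \ \text{a.e. on } \{w=0\}.
\]
One proves this via $\sqrt{|w|^2+\varepsilon^2}-\varepsilon$ and passes to the limit $\varepsilon\to0$ by dominated convergence. Then Cauchy--Schwarz gives $|(|w|)'|\le|w'|$ a.e. Applied to $w_e$, this yields $|(|u_e|)'|\le|Du_e|$ a.e. and $|u_e|\in H^1(I_e)$. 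Continuity of $|u|$ at the vertices follows from the continuity condition built into the domain, or, on a graph with a gauge, from the fact that $|u|$ is gauge invariant. Summing the $L^2$ norms over the edges then gives $|u|\in H^1(\mathcal{G})$.

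\textbf{Step 3: the equality case.} Equality in Cauchy--Schwarz on $\{w\neq0\}$ holds iff $w'\bar w/|w|$ is real, i.e. iff $w' = \frac{w}{|w|}(|w|)'$. On $\{w=0\}$ we have $w'=0$ a.e., by the standard fact that weak derivatives vanish a.e. on level sets. Equality there therefore holds trivially, and with the convention for $\frac{u}{|u|}$ both sides vanish. Finally I translate back to $u$: $e^{-i\theta_e}Du_e = \frac{e^{-i\theta_e}u_e}{|u_e|}(|u_e|)'$, and cancelling the unimodular factor gives $Du=\frac{u}{|u|}(|u|)'$. The converse direction follows by taking moduli.

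\textbf{Main obstacle.} The delicate step is the regularity bookkeeping in Step 1: justifying the product rule and the membership $w_e\in H^1$ when $A$ is only $L^2_{\mathrm{loc}}$ and the edge may be a half-line. I will handle this by working on bounded subintervals and then using the global $L^2$ bounds. A second point to address is the vanishing of $w'$ on the zero set, which I will quote from standard Sobolev theory.
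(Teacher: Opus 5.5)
Your proposal is correct and follows the same route as the paper, whose proof simply applies the classical diamagnetic inequality on each edge and sums over edges; you go further by actually proving that classical inequality via the gauge $w_e=e^{-i\theta_e}u_e$, the regularization of $|w_e|$, Cauchy--Schwarz, and the equality case using $w_e'=0$ a.e.\ on $\{w_e=0\}$. The only point the edgewise argument cannot supply, in your write-up or in the paper's, is continuity of $|u|$ at the vertices: it must come from the vertex condition built into $H^1_A(\mathcal{G})$ (continuity of $u$, possibly up to a unimodular phase), because gauge invariance of $|u|$ only transfers such a condition to $|u|$ and does not create one.
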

\begin{proof}
The result follows by applying the classical diamagnetic inequality on each edge of $\mathcal{G}$,
viewed as an interval of $\mathbb{R}$, and summing over all edges.
\end{proof}

\section{Effective Repulsion and Variational Reduction}
\label{Sec4}

In this section, we establish that the magnetic NLSE is variationally equivalent to a non-magnetic problem augmented with a repulsive potential supported on the cycles of the graph. This reduction captures the Aharonov-Bohm effect purely through topological flux parameters.

\begin{proposition} \label{equivalence}
Let $A \in C^1(\mathcal{G}) \cap L^2_{loc}(\mathcal{G})$. Then the minimization problem
    \begin{equation}
         \mathcal{E}_{A,\mathcal{G}}(\mu) \defeq \inf_{u \in H^1_{A,\mu}(\mathcal{G})} E_A(u, \mathcal{G})
    \end{equation}
    is equivalent to the problem
    \begin{equation}
        \mathcal{I}_\mathcal{G}(\mu) \defeq \inf_{v \in H^1_{\mu}(\mathcal{G})} I_A(v, \mathcal{G})\,,
        \label{Ig2}
    \end{equation}
    where the effective functional $I_A$ is defined as
    \begin{equation} \label{Ia}
        I_A(v,\mathcal{G})  = \frac{1}{2}\int_{\mathcal{G}} |v'|^2 \mathrm{d}x + \int_{C(\mathcal{G})} \Phi_{\gamma}(A) |v|^2 \mathrm{d}x - \frac{1}{p} \int_{\mathcal{G}} |v|^p \mathrm{d}x\,,
    \end{equation}
    with $C(\mathcal{G})$ denoting the set of independent cycles and $\Phi_{\gamma}(A)$ being a scalar repulsive potential determined by the magnetic flux.
\end{proposition}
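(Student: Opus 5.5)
The plan is to separate modulus and phase, using the gauge freedom on a metric graph to push all the magnetic information onto the cycles. I will work edge by edge, as in the proof of Theorem \ref{diam}, and use the polar decomposition $u=|u|e^{i\theta}$ wherever $u\neq0$. On each edge this gives the pointwise identity
\begin{equation}
|Du|^2 = \big((|u|)'\big)^2 + |u|^2(\theta'-A)^2 .
\end{equation}
This refines Theorem \ref{diam}: the first term is the non-magnetic kinetic energy of $v=|u|\in H^1_\mu(\mathcal{G})$, and the second is a nonnegative ``magnetic excess''. The nonlinear and mass terms depend only on $|u|$. So the whole problem reduces to minimizing the excess over admissible phases $\theta$ for a fixed modulus $v$.

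\textbf{Step 1 (gauge reduction to the cycles).} Fix a spanning tree $T$ of $\mathcal{G}$; it contains all the half-lines and omits exactly $\beta_{\mathcal{G}}$ edges. On $T$ set $\chi(x)=\int_{o}^{x}A$, integrating along the unique tree path from a base vertex $o$. This is well defined and continuous, and since $A\in C^1$ it is regular enough on each edge. The unitary map $u\mapsto e^{-i\chi}u$ preserves $\|u\|_2$ and $\|u\|_p$ and intertwines the boundary conditions \eqref{bc} with the Kirchhoff conditions \eqref{kirchhoff}. It removes $A$ on every tree edge. On each of the $\beta_{\mathcal{G}}$ remaining edges we can further redistribute the potential, so that on every independent cycle $\gamma$ the only gauge-invariant datum left is the flux $\Phi_\gamma=\oint_\gamma A$. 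Its effect depends only on
\begin{equation}
\delta_\gamma=\operatorname{dist}\Big(\tfrac{\Phi_\gamma}{2\pi},\mathbb{Z}\Big)\in\Big[0,\tfrac12\Big].
\end{equation}

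\textbf{Step 2 (phase optimization on a cycle).} After the gauge, any continuous phase on $\gamma$ must satisfy the winding constraint $\oint_\gamma(\theta'-A)=2\pi k-\Phi_\gamma$ for some $k\in\mathbb{Z}$, and off the cycles we may take $\theta'=A=0$. I will choose the excess to be constant along each cycle,
\begin{equation}
\theta'-A\equiv \frac{2\pi\delta_\gamma}{|\gamma|},
\end{equation}
with $k$ chosen to minimize $|2\pi k-\Phi_\gamma|$. Starting from any $v\in H^1_\mu(\mathcal{G})$, this produces $u=ve^{i\theta}\in H^1_{A,\mu}(\mathcal{G})$ with $E_A(u,\mathcal{G})=I_A(v,\mathcal{G})$, where
\begin{equation}
\Phi_\gamma(A)=\frac{2\pi^2\delta_\gamma^2}{|\gamma|^2}\ge 0 .
\end{equation}
This potential is repulsive, and it vanishes exactly when the flux is integer. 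Hence $\mathcal{E}_{A,\mathcal{G}}(\mu)\le\mathcal{I}_{\mathcal{G}}(\mu)$.

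\textbf{Step 3 (the reverse inequality — main obstacle).} For a general $u$, Cauchy--Schwarz on the cycle only gives
\begin{equation}
\int_\gamma |u|^2(\theta'-A)^2 \ \ge\ \frac{4\pi^2\delta_\gamma^2}{\int_\gamma |u|^{-2}\,\mathrm{d}x}.
\end{equation}
This is a nonlocal quantity. It agrees with $2\int_\gamma\Phi_\gamma(A)|u|^2$ only when $|u|$ is constant on $\gamma$. It also degenerates if $u$ vanishes somewhere on $\gamma$, since there the phase may jump and the constraint is lost.

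I would therefore first try to prove the equivalence in the form ``minimizing sequences and minimizers correspond''. The idea is to compare both infima with $E(\phi_\mu,\mathbb{R})$, which both functionals reach by escaping along a half-line where there is no magnetic effect. Together with the upper bound of Step 2, this is what the existence criterion of Section \ref{Sec5} actually needs. If a genuine two-sided identity is required instead, I would redefine $\Phi_\gamma$ through the optimal weighted (Cauchy--Schwarz) constant above and then show that it is bounded below by the explicit uniform value. This step is where I expect the real difficulty.
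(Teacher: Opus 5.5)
There is a genuine gap, and it is your Step 3. You flagged it yourself, but it cannot be closed: with the explicit constant-per-cycle potential (yours or the paper's), the two-sided identity is false.

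\textbf{The Cauchy--Schwarz bound is sharp.} Your bound is not merely a lower bound; it is the exact infimum over admissible phases.
\begin{itemize}
\item If $v=|u|>0$ on $\gamma$, take $\theta'-A=\kappa\,v^{-2}$, with $\kappa$ fixed by the winding constraint and the optimal integer. This phase is admissible and gives equality.
\item If $v(x_0)=0$ for some $x_0\in\gamma$, then $v(x)\le\|v'\|_2|x-x_0|^{1/2}$ forces $\int_\gamma v^{-2}=\infty$. Letting $\theta$ jump at $x_0$, with $\theta'=A$ elsewhere, gives an element of $H^1_A$ with zero excess.
\end{itemize}

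\textbf{Your fallback (b) runs the wrong way.} A second Cauchy--Schwarz, $|\gamma|^2\le\int_\gamma v^2\int_\gamma v^{-2}$, gives
\[
\frac{4\pi^2\delta_\gamma^2}{\int_\gamma v^{-2}\,\mathrm{d}x}\;\le\;\frac{4\pi^2\delta_\gamma^2}{|\gamma|^2}\int_\gamma v^2\,\mathrm{d}x ,
\]
with equality only if $v$ is constant on $\gamma$. So the optimal weighted constant is bounded above, not below, by the uniform value. The uniform-excess functional is only an upper bound.

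\textbf{The Step 2 inequality is generically strict.} Take the tadpole with non-integer flux, and suppose $v^*\ge0$ minimizes $I_A$ (e.g.\ for small flux). If $v^*$ were constant on the loop, Kirchhoff would force $(v^*)'(0)=0$ on the half-line. That makes the half-line part a half-soliton centred at the vertex, which pins $\mu$ to a single value. For every other mass,
\[
\mathcal{E}_{A,\mathcal{T}}(\mu)\le E_A(v^*e^{i\theta_{\mathrm{opt}}},\mathcal{T})<I_A(v^*,\mathcal{T})=\mathcal{I}_{\mathcal{T}}(\mu).
\]
For the same reason your plan (a) fails: the infima differ, so minimizers cannot correspond. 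What is true is the exact but nonlocal reduction
\[
\mathcal{E}_{A,\mathcal{G}}(\mu)=\inf_{v\in H^1_\mu(\mathcal{G})}\bigl\{E(v,\mathcal{G})+\textstyle\sum_\gamma 2\pi^2\delta_\gamma^2\big/\!\int_\gamma v^{-2}\bigr\},
\]
valid for edge-disjoint cycles with $1/\infty=0$, together with your one-sided bound $\mathcal{E}_{A,\mathcal{G}}\le\mathcal{I}_{\mathcal{G}}$.

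\textbf{How the paper's proof gets past this.} In its Lagrange-multiplier step it varies $(A-\theta')^2$ instead of $(A-\theta')^2|v|^2$, and so obtains $\theta'-A$ constant. Keeping the weight, the Euler--Lagrange condition is $(\theta'-A)|v|^2=\mathrm{const}$, which is precisely your Cauchy--Schwarz extremal. The paper's formula is therefore the value at an admissible but non-optimal phase, i.e.\ your Step 2. Your Steps 1--2 already contain everything correct in that argument.

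Your constant $2\pi^2\delta_\gamma^2/|\gamma|^2$ is the one consistent with the $\frac12$ in $E_A$ and the unweighted potential term in $I_A$. The paper's $4\pi^2\delta_\gamma^2/|\gamma|^2$ is off by a factor $2$.

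\textbf{What the one-sided bound buys.} It lets you transfer ``competitor below $E(\phi_\mu,\mathbb{R})$'' information from $I_A$ to $E_A$. You would then still need to prove the threshold criterion for $E_A$ itself. Nothing transfers in the direction needed for nonexistence.

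\textbf{A smaller point on Step 2.} ``Constant excess along each cycle'' is consistent only when the independent cycles are edge-disjoint. For overlapping cycles (e.g.\ a theta graph) you can instead put the residual flux on the non-tree edges. The resulting upper bound then depends on the chosen spanning tree and cycle basis.
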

\begin{proof}
Consider a function $u \in H^1_A(\mathcal{G})$. It can be decomposed as $u(x) = v(x)e^{i \theta(x)}$ with $v = |u|$.
A direct computation yields
\begin{align*}
    \int_{\mathcal{G}} |D u|^2 &= \int_\mathcal{G} \left| v'e^{i \theta} + i \theta' v e^{i\theta} - i A v e^{i\theta}\right|^2 \mathrm{d}x \\
    &= \int_\mathcal{G} \left| v' - i(A-\theta')v \right|^2 \mathrm{d}x \\
    &= \int_{\mathcal{G}} |v'|^2 \mathrm{d}x + \int_{\mathcal{G}} (A - \theta')^2 |v|^2 \mathrm{d}x\,.
\end{align*}
Since our goal is to characterize the solutions of $\mathcal{E}_{\mathcal{G}}(\mu)$, we reformulate the problem as
\begin{equation}
    \inf_{v \in H^1_{\mu}(\mathcal{G}), \hspace{1mm} \theta \in C^1(\mathcal{G}) } E_A(u, \mathcal{G}) \,,
\end{equation}
thereby decoupling the variables and focusing on the minimization with respect to $\theta$. In particular, we seek the function that minimizes the quantity
$$
\int_\mathcal{G} (A - \theta')^2 |v|^2 \mathrm{d}x\,,
$$
while preserving the boundary conditions \eqref{bc}. 
\bigskip

Let $\gamma$ be a cycle parameterized by $[0,L]$. While the continuity conditions at the vertices require $u(0) = u(L)$, the phase function $\theta(x)$ must only satisfy the single-valued condition:
\begin{equation}
\theta(L) - \theta(0) = 2\pi m, \hspace{0.5cm} m \in \mathbb{Z}\,.
\end{equation}

Using the \emph{Lagrange Multiplier Theorem} for fixed $v$, one can find the optimal $\theta'$ subject to the periodicity constraint 
\begin{equation}
    \int_0^L \theta'(x) \mathrm{d}x = 2 \pi m.
\end{equation}
For some $\lambda \in \mathbb{R}$, we have the variation:
\begin{equation}
\nabla (A-\theta')^2 - \lambda \nabla\left(\int_0^L \theta'(x) \mathrm{d}x - 2\pi m\right ) = 0\,.
\end{equation}
Therefore we set $\theta'(x) = A(x) + \frac{1}{2}\lambda L$ for some constant $\lambda$ to be determined. The single-valued constraint becomes:
\begin{equation}
\int_0^L \theta'(x)\, \mathrm{d}x = \int_0^L \left[A(x) + \frac{1}{2}\lambda L\right ]\, \mathrm{d}x = 2\pi m\,.
\end{equation}
This gives us:
\begin{equation}
\int_0^L A(x) \mathrm{d}x + \frac{1}{2}\lambda L^2 = 2\pi m\,,
\end{equation}
which yields:
\begin{equation}
\lambda = \frac{4\pi m}{L^2} - \frac{2}{L^2}\int_0^L A(x) \mathrm{d}x \,.
\end{equation}
Consequently, substituting $\lambda$ back into the expression for $(A-\theta')$, we obtain:
\begin{equation}
     A(x) - \theta'(x) = A(x) - \left(A(x) + \frac{1}{2}\lambda L\right) = \frac{1}{L}\int_0^L A(x) \mathrm{d}x - \frac{2\pi m}{L}\,,
\end{equation} 
and the magnetic energy contribution becomes:
\begin{equation}
\int_0^L |A - \theta'|^2|v|^2 \mathrm{d}x =  \int_0^L  \left(\frac{1}{L}\int_0^L A(x) \mathrm{d}x - \frac{2\pi m}{L}\right)^2|v|^2 \mathrm{d}x\,.
\end{equation}
Finally, we define $m$ as the integer satisfying the minimization condition:
$$
(\alpha_\gamma - m)^2 = \mathrm{dist}\left(\alpha_\gamma, \mathbb{Z}\right)^2 \defeq \min_{k \in \mathbb{Z}} (\alpha_\gamma - k)^2,
$$
where we have set $\alpha_\gamma \defeq \frac{1}{2\pi} \int_{0}^L A(x) \mathrm{d} x$.
And thus we have:
\begin{equation}
    \int_0^L \left|A - \theta'\right|^2|v|^2 \mathrm{d}x = \int_0^L \frac{4 \pi^2}{L^2} \mathrm{dist}\left( \frac{\int_0^L A \mathrm{d}x}{2 \pi}, \mathbb{Z} \right)^2 |v|^2 \mathrm{d}x\, .
\end{equation}
Then, $\left(A(x) - \theta'(x)\right)$ is constant on the loop, and depends only on the size of the loop and the average magnetic flux through the cycle. 

\medskip

We can now easily generalize this analysis for a general graph with more cycles. 
Let $C(\mathcal{G})$ be the set of independent cycles of $\mathcal{G}$. Then for every $\gamma \in C(\mathcal{G})$:
\begin{equation}
    \Phi_{\gamma}(A) = \frac{4 \pi^2}{|\gamma|^2} \mathrm{dist}\left( \frac{\int_\gamma A(x) \mathrm{d}x}{2 \pi}, \mathbb{Z} \right)^2
\end{equation}
and the energy functional becomes:
\begin{equation} \label{Ia2}
     I_A(v,\mathcal{G})  = \frac{1}{2}\int_{\mathcal{G}} |v'|^2 \mathrm{d}x + \int_{C(\mathcal{G})}\Phi_{\gamma}(A) |v|^2 \mathrm{d}x - \frac{1}{p} \int_{\mathcal{G}} |v|^p \mathrm{d}x\,.
\end{equation}
Since in the definition of Eq. \eqref{Ia} only $v = |u|$ appears, for Proposition \ref{equivalence} we have $v \in H^1_{\mu}(\mathcal{G})$ and the result follows.
\end{proof}

\begin{remark}
We highlight that if \[
\int_{\gamma} A(x)\mathrm{d}x = n\cdot 2\pi\,, \qquad n\in\mathbb{Z},
\]
then there is no magnetic influence and the minimizing problem is the standard one.
\end{remark}

The functional $I_A$ can be viewed as an extension of the standard NLS energy, augmented with an effective magnetic potential term:
\begin{equation}
W(x) \defeq \sum_{\gamma \in C(\mathcal{G})} \Phi_{\gamma}(A) \chi_{\gamma}(x)\,,
\label{magneticpotential}
\end{equation}
where $\chi_\gamma$ is the indicator function of the cycle $\gamma$. Consequently, any critical point of the functional must satisfy the corresponding Euler-Lagrange equation. By standard variational arguments (see \cite{Adami2015} for further details), the minimizer $u$ satisfies the differential equation:
\begin{equation}
    -u'' - |u|^{p-2}u + W(x)u = \omega u\,,
    \label{eqdiff}
\end{equation}
on every edge, together with standard Kirchhoff conditions at the vertices. Specifically, on the non-magnetic edges $e \notin C(\mathcal{G})$, the equation is the standard stationary NLS:
\begin{equation}
    u'' + |u|^{p-2}u = -\omega u\,,
\end{equation}
while on edges $e \in \gamma$ belonging to a cycle, the equation involves the effective potential:
\begin{equation}
    u'' + |u|^{p-2}u = (\Phi_\gamma(A) - \omega) u\,.
\end{equation}

\section{Existence of ground states}
\label{Sec5}

Having reformulated the magnetic problem in terms of effective repulsive potentials, we address the question of ground state existence using concentration-compactness arguments. We seek solutions to the minimization problem:
\begin{equation}
    \label{infprob}
         \mathcal{I}_\mathcal{G}(\mu) \defeq \inf_{u \in H^1_{A,\mu}(\mathcal{G})} I_A(u, \mathcal{G})\,.
\end{equation}
We begin by stating the fundamental existence criterion.

\begin{theorem}
\label{existencecrit}
Fix $2 < p < 6$ and let $\mathcal{G}$ be a metric graph with at least one infinite edge. If there exists a function $v \in H^1_{\mu}(\mathcal{G})$ such that 
\begin{equation}
  I_A(v, \mathcal{G})  < E(\phi_{\mu}, \mathbb{R})\,,
  \label{criterion}
\end{equation}
then the functional $I_A(\cdot, \mathcal{G})$ admits a ground state at mass $\mu$.
\end{theorem}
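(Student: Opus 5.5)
We argue by the direct method combined with concentration-compactness, exactly as in the proof of Corollary 3.4 in \cite{Adami2016}, treating the effective potential $W$ from \eqref{magneticpotential} as a bounded, nonnegative perturbation supported on the compact set $C(\mathcal{G})$.

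\textbf{Step 1: minimizing sequences are bounded.} Since $W\ge 0$ is bounded, \eqref{GNp} gives
\[
I_A(v,\mathcal{G}) \ge \tfrac12\|v'\|_2^2 - \tfrac{K_{\mathcal{G},p}}{p}\mu^{\frac{p+2}{4}}\|v'\|_2^{\frac p2-1}.
\]
Because $p<6$, we have $\frac p2-1<2$, so $\mathcal{I}_\mathcal{G}(\mu)>-\infty$ and every minimizing sequence $(u_n)$ is bounded in $H^1(\mathcal{G})$. Replacing $u_n$ by $|u_n|$ does not increase the energy, so we may take $u_n\ge0$. Passing to a subsequence, $u_n\rightharpoonup u$ weakly in $H^1$ and $u_n\to u$ locally uniformly on $\mathcal{G}$. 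The local convergence uses the compactness of the embedding on bounded subgraphs.

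\textbf{Step 2: exclude vanishing, $u\equiv0$.} Since $C(\mathcal{G})$ is compact, local uniform convergence gives $\int_{C(\mathcal{G})}W u_n^2\to 0$. Hence
\[
\liminf_n I_A(u_n)\ge \liminf_n E(u_n,\mathcal{G}).
\]
If $u_n\to0$ locally uniformly, the mass eventually lies entirely on the half-lines, up to a vanishing error. We then use the standard cut-off-and-rearrangement argument (cut $u_n$ near the compact core and unfold the half-lines onto $\mathbb R$) to conclude
\[
\liminf_n E(u_n,\mathcal{G})\ge E(\phi_\mu,\mathbb R).
\]
This gives $\mathcal{I}_\mathcal{G}(\mu)\ge E(\phi_\mu,\mathbb R)$, which contradicts \eqref{criterion}. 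So $u\not\equiv0$ and $m:=\|u\|_2^2\in(0,\mu]$.

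\textbf{Step 3: exclude dichotomy, $m<\mu$.} Write $u_n=u+w_n$ with $w_n\rightharpoonup0$. Brezis--Lieb splitting of the $L^2$, $L^p$ and kinetic norms applies, and the potential term also splits, since $\int W w_n^2\to0$ by locality. This gives
\[
I_A(u_n)=I_A(u)+E(w_n,\mathcal{G})+o(1), \qquad \|w_n\|_2^2\to\mu-m.
\]
Because $w_n$ vanishes locally, Step 2 gives $\liminf E(w_n,\mathcal{G})\ge E(\phi_{\mu-m},\mathbb R)$. We then need the strict subadditivity-type estimate. For $\theta=\mu/m>1$ and $p>2$,
\[
I_A(\sqrt{\theta}\,u)<\theta I_A(u)\quad\text{and}\quad E(\phi_{\mu-m},\mathbb R)=\Big(\tfrac{\mu-m}{\mu}\Big)^{2\beta+1}E(\phi_\mu,\mathbb R)>\tfrac{\mu-m}{\mu}E(\phi_\mu,\mathbb R),
\]
where the second relation uses the scaling law of the soliton energy. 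The first inequality holds because the kinetic and potential terms scale linearly in $\theta$, the $L^p$ term scales like $\theta^{p/2}$, and $\int|u|^p>0$. Combining these with $I_A(u)\ge \frac m\mu\mathcal{I}_\mathcal{G}(\mu)$, the equality case being strict, and with $\mathcal{I}_\mathcal{G}(\mu)<E(\phi_\mu,\mathbb R)<0$, we obtain
\[
\mathcal{I}_\mathcal{G}(\mu)>\tfrac m\mu\mathcal{I}_\mathcal{G}(\mu)+\tfrac{\mu-m}{\mu}\mathcal{I}_\mathcal{G}(\mu),
\]
which is a contradiction. Hence $m=\mu$. Then $u_n\to u$ in $L^2$, hence in $L^p$ by \eqref{GNp}, and weak lower semicontinuity gives $I_A(u)\le\mathcal{I}_\mathcal{G}(\mu)$. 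So $u$ is a ground state.

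The main obstacle is Step 2, the lower bound $E(\phi_\mu,\mathbb R)$ for sequences escaping to infinity along the half-lines. I would handle it by multiplying by a cut-off that equals $1$ far out on each half-line, with error terms controlled by local uniform convergence. I would then use that a function supported on finitely many disjoint half-lines has energy at least that of the line soliton with the same mass. This can be seen via symmetric rearrangement, or by concatenating and comparing with $\mathcal{E}_{\mathbb R}$ through the strict concavity of $\mu\mapsto\mu^{2\beta+1}$.
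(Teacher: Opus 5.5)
Your proof is correct and has the same skeleton as the paper's: a bounded minimizing sequence, a weak limit, exclusion of vanishing by comparison with the line soliton, and strong convergence once no mass is lost. The real difference is how you exclude dichotomy.

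\textbf{The paper's route.} The paper does not treat dichotomy inside this proof. It invokes Proposition~\ref{minseq}, where dichotomy is ruled out by strict subadditivity of $\nu\mapsto\mathcal{I}_\mathcal{G}(\nu)$. That subadditivity is deduced from strict concavity of $\mathcal{I}_\mathcal{G}$, written as an infimum of $f_u(\nu)=I_A(\sqrt{\nu}\,u,\mathcal G)$ over normalized profiles. The concentration-compactness details are delegated to \cite{Adami2025}.

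\textbf{Your route.} You split $I_A(u_n)=I_A(u)+E(w_n,\mathcal G)+o(1)$ via Brezis--Lieb. You bound the escaping part below by the line soliton at the residual mass, reusing your vanishing estimate. You then close with three ingredients: the single-mass scaling inequality $I_A(\sqrt{\theta}\,u)<\theta I_A(u)$, the homogeneity $E(\phi_\nu,\mathbb R)=\nu^{2\beta+1}E(\phi_1,\mathbb R)$, and the hypothesis \eqref{criterion}, used a second time.

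\textbf{What each buys.} Your argument is self-contained and needs no information about $\mathcal{I}_\mathcal{G}$ at masses other than $\mu$. In particular, it avoids deducing strict subadditivity from the claim that an infimum of strictly concave functions is strictly concave. That claim is not automatic: $\inf_{\varepsilon\in(0,1]}\varepsilon(\nu-\nu^{p/2})$ vanishes identically on $(0,1]$. It really needs a uniform bound on $f_u''$ over the relevant family. You also sketch the cut-off argument showing that vanishing sequences have energy at least $E(\phi_\mu,\mathbb R)$, which the paper only asserts. The paper's route is shorter and fits the general concentration-compactness framework, but it leans on that structural fact and on the cited machinery.

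\textbf{Two small slips.}
\begin{itemize}
\item The comparison $\sum_i\mathcal{E}_{\mathbb R}(\mu_i)\ge\mathcal{E}_{\mathbb R}(\sum_i\mu_i)$ comes from the convexity (superadditivity) of $\nu\mapsto\nu^{2\beta+1}$, not from its concavity.
\item Concatenating the pieces from three or more half-lines onto $\mathbb R$ requires truncating them to compact support first. The half-line-by-half-line comparison is the cleaner option.
\end{itemize}
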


The proof of Theorem \ref{existencecrit} relies on establishing properties of the energy functional and minimizing sequences. The strategy mirrors the arguments developed in \cite{Adami2025} for NLS with potentials.

First, we establish basic estimates.

\begin{remark} \label{phiestimate}
Since the effective potential $\Phi_\gamma(A)$ is non-negative and bounded (as explained in Section \ref{Sec4}), the following estimate holds for any $v \in H^1_\mu(\mathcal{G})$:
\begin{equation}
    0 \leq \sum_{\gamma \in C(\mathcal{G})}  \int_{\gamma}\Phi_{\gamma}(A) |v|^2 \mathrm{d}x \leq \|\Phi\|_{\infty} \cdot \mu \,.
\end{equation}
Consequently, it is always true that $I_A (v, \mathcal{G}) \geq E(v, \mathcal{G})$.
Moreover, by the definition of $\phi_\gamma$, for every $\gamma$
\begin{equation}
    \|\phi_{\gamma}\|_{L^\infty(\gamma)} \leq \min \left\{  \frac{4 \pi^2}{L^2}, \|A\|^2_{L^\infty(\gamma)} \right\}.
\end{equation}
\end{remark}
\begin{proposition} \label{estimates}
Let $\mathcal{G}$ be a noncompact metric graph.
For all $u \in H^1_{\mu}(\mathcal{G})$ satisfying the condition
\begin{equation}
    I_A(u, \mathcal{G}) \ \leq \ \frac{1}{2} \, \mathcal{I}_{\mathcal{G}} (\mu) \ < \ 0,
    \label{half inf}
\end{equation}
the following estimates hold for some constants $C_1, C_2 > 0$:
\begin{align} \label{largemu}
     C_1 \mu^{2 \beta +1} \leq &  \|u'\|_2^2 \leq C_2 \mu^{2 \beta + 1 }\,; \\
    C_1 \mu^{2 \beta +1} \leq & \|u\|^p_p  \leq C_2 \mu^{2 \beta +1}\,; \\
    C_1 \mu^{\beta +1} \leq  & \|u\|^2_{L^{\infty}(\mathcal{G})} \leq  C_2 \mu^{\beta +1}.
 \end{align}
\end{proposition}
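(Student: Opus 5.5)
The plan is to combine the Gagliardo--Nirenberg inequalities \eqref{GNp}--\eqref{GNinfty} with the sign information from \eqref{half inf} and the nonnegativity of the effective potential (Remark \ref{phiestimate}). We also need an a priori bound on $\mathcal{I}_\mathcal{G}(\mu)$ from both sides in terms of $\mu^{2\beta+1}$. Throughout, write $t=\|u'\|_2^2$ and note that $\frac p2-1 = \frac{p-2}{2}$. With $\mu=\|u\|_2^2$, \eqref{GNp} reads $\|u\|_p^p\le K\mu^{\frac{p+2}{4}} t^{\frac{p-2}{4}}$.

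\emph{Upper bounds.} Since the potential term is nonnegative and $I_A(u)<0$, we have
\[
\tfrac12 t \le \tfrac12 t+\int_{C(\mathcal G)}\Phi_\gamma|u|^2 < \tfrac1p\|u\|_p^p \le \tfrac{K}{p}\mu^{\frac{p+2}{4}} t^{\frac{p-2}{4}}.
\]
Because $p<6$, we have $\frac{p-2}{4}<1$. Solving gives $t^{\frac{6-p}{4}}\le C\mu^{\frac{p+2}{4}}$, that is, $t\le C\mu^{\frac{p+2}{6-p}}$. One checks that $\frac{p+2}{6-p}=2\beta+1$, since $2(p-2)+(6-p)=p+2$. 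Plugging this back into \eqref{GNp} gives $\|u\|_p^p\le C\mu^{\frac{p+2}{4}+\frac{p-2}{4}(2\beta+1)}$. The exponent equals $2\beta+1$, by the same algebra as on the line, where the scaling $\phi_\mu$ gives $\|\phi_\mu'\|^2\sim\mu^{2\beta+1}$. Likewise, \eqref{GNinfty} gives $\|u\|_\infty^2\le C\mu^{1/2}t^{1/2}\le C\mu^{\frac12+\beta+\frac12}=C\mu^{\beta+1}$.

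\emph{Lower bounds.} Here I need $\mathcal I_\mathcal G(\mu)\le -c\mu^{2\beta+1}$. This is the main obstacle, because the repulsive potential raises the energy and the naive competitor argument must avoid the cycles. I would take a soliton $\phi_\mu$, halved and placed on an infinite edge far from all cycles, or more simply a compactly supported scaled profile on the half-line. Such a function vanishes on $C(\mathcal G)$, so its $I_A$ equals its $E$. By scaling, the energy of a half-soliton of mass $\mu$ on a half-line with a Neumann-type profile is $-c\mu^{2\beta+1}$ with $c>0$. One can use the half-line ground state (the restriction of $\phi_{2\mu}$), translated onto the tail of the infinite edge; a cutoff that slightly decreases $|\,\cdot\,|$ has arbitrarily small effect. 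This yields $\mathcal I_\mathcal G(\mu)\le -c\mu^{2\beta+1}<0$, which also justifies the hypothesis in \eqref{half inf}.

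Then \eqref{half inf} gives
\[
\tfrac1p\|u\|_p^p \ge -I_A(u)\ge \tfrac c2 \mu^{2\beta+1},
\]
again using that the kinetic and potential terms are nonnegative. For $\|u'\|_2^2$, I would use \eqref{GNp} in reverse: $c\mu^{2\beta+1}\le\|u\|_p^p\le K\mu^{\frac{p+2}{4}}t^{\frac{p-2}{4}}$. Solving for $t$ gives $t\ge c'\mu^{(2\beta+1-\frac{p+2}{4})\frac{4}{p-2}}=c'\mu^{2\beta+1}$, by the exponent identity above. For the $L^\infty$ bound, I would use the interpolation $\|u\|_p^p\le\|u\|_\infty^{p-2}\mu$. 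This gives $\|u\|_\infty^{p-2}\ge c\mu^{2\beta}$, i.e. $\|u\|_\infty^2\ge c\mu^{\frac{4\beta}{p-2}}=c\mu^{\frac{4}{6-p}}$. Since $\frac{4}{6-p}=\beta+1$, this is $c\mu^{\beta+1}$. Finally, I would take $C_1$ as the minimum and $C_2$ as the maximum of the constants obtained, which depend only on $p$ and $\mathcal G$ (through $K_{\mathcal G,p}$ and $K_{\mathcal G,\infty}$).
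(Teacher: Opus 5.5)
Your proof is correct and follows the paper's route: drop the nonnegative potential term, then run the Gagliardo--Nirenberg argument of Lemma 2.6 in \cite{Adami2016}, which the paper only cites. Your explicit bound $\mathcal{I}_\mathcal{G}(\mu)\le -c\mu^{2\beta+1}$ fills the step that citation leaves implicit, namely the bound needed for the lower estimates. Obtain it from a compactly supported scaled profile in the interior of the half-line (which meets no cycle), or from a cut-off full soliton $\phi_\mu$, rather than from a translated half-line ground state, which would have a jump at its endpoint.
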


\begin{proof}
Let $u \in H^1_{\mu}(\mathcal{G})$ satisfy hypothesis \eqref{half inf}. Thanks to Remark \ref{phiestimate}, it holds that
\begin{equation}
    \frac{1}{2} \|u'\|^2_2 - \frac{1}{p} \|u\|_p^p \leq \mathcal{I}_{\mathcal{G}} (\mu) - \int_{C(\mathcal{G})} \Phi_\gamma |u|^2 \mathrm{d}x \leq \frac{1}{2} \mathcal{I}_{\mathcal{G}}(\mu).
\end{equation}
Using the Gagliardo-Nirenberg inequalities, the proof follows with the same argument as in Lemma 2.6 of \cite{Adami2016}.
\end{proof} 

Next, we establish the concavity of the ground state energy function.

\begin{proposition}
The function $\mathcal{I}_\mathcal{G}(\mu)$ defined in \eqref{infprob} is strictly concave and strictly subadditive for $\mu > 0$.
\end{proposition}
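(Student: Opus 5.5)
The plan is to write every element of $H^1_\mu(\mathcal G)$ as $\sqrt{\mu}\,w$ with $\|w\|_2=1$. Both claims then follow from how $I_A$ scales in the mass parameter, combined with the uniform bounds of Proposition \ref{estimates}. Write $W$ for the effective potential \eqref{magneticpotential}. For $w\in H^1_1(\mathcal G)$ put
\[
a(w)\defeq \frac12\|w'\|_2^2+\int_{\mathcal G}W|w|^2\,\mathrm{d}x,\qquad b(w)\defeq\frac1p\|w\|_p^p .
\]
Then
\[
I_A(\sqrt{\mu}\,w,\mathcal G)=\mu\,a(w)-\mu^{p/2}\,b(w),
\]
so that $\mathcal I_{\mathcal G}(\mu)=\inf_{w} f_w(\mu)$ with $f_w(\mu)\defeq\mu a(w)-\mu^{p/2}b(w)$. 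Since $p>2$, each $f_w$ is concave in $\mu$. An infimum of concave functions is concave, so $\mathcal I_{\mathcal G}$ is concave on $(0,\infty)$. It is also finite there, by \eqref{GNp} and $W\ge0$.

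\emph{Preliminary step: $\mathcal I_{\mathcal G}(\mu)<0$ for every $\mu>0$.} This is needed to apply Proposition \ref{estimates}. I would take a fixed profile supported in a half-line and translate it far from the compact part of the graph, so that it never meets the cycles and the $W$-term vanishes. Its non-magnetic energy can be made negative by the mass-preserving dilation $w_\lambda(x)=\lambda^{1/2}w(\lambda x)$: the kinetic term scales as $\lambda^2$ and the $L^p$ term as $\lambda^{p/2-1}$, and $p/2-1<2$, so for small $\lambda$ the energy is negative. Because the support lies on the half-line, the $W$-term stays zero throughout.

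\emph{Strict concavity.} Fix $0<\mu_1<\mu_2$ and $s\in(0,1)$, and set $\mu_s=s\mu_1+(1-s)\mu_2$. Choose $w_n$ with $f_{w_n}(\mu_s)\to\mathcal I_{\mathcal G}(\mu_s)$. For $n$ large, $\sqrt{\mu_s}\,w_n$ satisfies \eqref{half inf}, so Proposition \ref{estimates} gives $b(w_n)\ge c>0$ uniformly in $n$. Now write
\[
f_{w_n}(\mu)=\mu a(w_n)-\mu^{p/2}b(w_n).
\]
The linear part $\mu a(w_n)$ is affine, and $-\mu^{p/2}b(w_n)$ is strictly concave with modulus bounded below by $c$ times the strict-concavity defect of $-\mu^{p/2}$. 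This yields
\[
s f_{w_n}(\mu_1)+(1-s)f_{w_n}(\mu_2)\le f_{w_n}(\mu_s)-\delta
\]
with $\delta>0$ independent of $n$. Since $\mathcal I_{\mathcal G}(\mu_i)\le f_{w_n}(\mu_i)$, letting $n\to\infty$ gives
\[
s\,\mathcal I_{\mathcal G}(\mu_1)+(1-s)\,\mathcal I_{\mathcal G}(\mu_2)\le \mathcal I_{\mathcal G}(\mu_s)-\delta,
\]
which is strict concavity (the inequality runs the right way for concavity).

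\emph{Strict subadditivity.} By the scaling identity, for $t>1$ and $v\in H^1_\mu$,
\[
I_A(\sqrt t\,v)=t\,I_A(v)-\frac{t^{p/2}-t}{p}\|v\|_p^p .
\]
Applied along a minimizing sequence, with $\|v_n\|_p^p\ge C_1\mu^{2\beta+1}$ from Proposition \ref{estimates}, this gives $\mathcal I_{\mathcal G}(t\mu)<t\,\mathcal I_{\mathcal G}(\mu)$. Taking $\mu_1\le\mu_2$ and using this with $t=(\mu_1+\mu_2)/\mu_i$ for $i=1,2$, a convex combination yields the standard chain
\[
\mathcal I_{\mathcal G}(\mu_1+\mu_2)<\frac{\mu_1}{\mu_1+\mu_2}\,\mathcal I_{\mathcal G}(\mu_1+\mu_2)+\frac{\mu_2}{\mu_1+\mu_2}\,\mathcal I_{\mathcal G}(\mu_1+\mu_2)\le \mathcal I_{\mathcal G}(\mu_1)+\mathcal I_{\mathcal G}(\mu_2).
\]
Alternatively, subadditivity follows directly from strict concavity together with $\mathcal I_{\mathcal G}(0^+)=0$.

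The main obstacle is the uniformity of $\delta$, that is, the lower bound on $\|w_n\|_p$ along minimizing sequences. This is exactly where negativity of $\mathcal I_{\mathcal G}$ and Proposition \ref{estimates} enter, so the preliminary negativity step (the half-line construction avoiding the cycles) must be done carefully.
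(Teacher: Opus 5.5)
Your proof is correct and follows the paper's route: you rescale $u=\sqrt{\mu}\,w$, view $\mathcal I_{\mathcal G}$ as an infimum of the concave functions $f_w$, and use the lower bound on $\|w\|_p^p$ from Proposition \ref{estimates} (the role of the paper's set $U$); the paper obtains subadditivity by your alternative, concavity plus $\mathcal I_{\mathcal G}(0)=0$, and your uniform $\delta$ and negativity check make explicit what its one-line ``infimum of strictly concave functions'' step tacitly requires. One slip: in your final chain the first relation is actually an equality, and the strict sign belongs to the second step, where it comes from $\mathcal I_{\mathcal G}(t\mu_i)<t\,\mathcal I_{\mathcal G}(\mu_i)$ with $t=(\mu_1+\mu_2)/\mu_i$.
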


\begin{proof}
Let $U$ be the set of normalized profiles defined by:
\begin{equation}
    U \defeq \left\{ u \in H^1(\mathcal{G}) : \|u\|_2^2= 1, \ \mu^{\frac{p}{2}} \|u\|_p^p \geq C_1 \mu^{2 \beta +1} \right\},
\end{equation}
where $C_1$ is the constant from Proposition \ref{estimates}. 
Consider the family of functions $f_u(\mu)$ obtained by scaling $u \in U$:
\begin{equation}
    f_u(\mu) \defeq I_A(\sqrt{\mu} u, \mathcal{G}) = \frac{\mu}{2} \int_{\mathcal{G}} |u'|^2 \mathrm{d}x - \frac{\mu^{\frac{p}{2}}}{p} \|u\|_p^p + \frac{\mu}{2} \int_{C(\mathcal{G})}\Phi_{\gamma} |u|^2 \mathrm{d}x.
\end{equation}
We observe that $\mathcal{I}_\mathcal{G}(\mu) = \inf_{u \in U} f_u(\mu)$. Computing the second derivative with respect to $\mu$:
\begin{equation}
  f_u''(\mu) = - \frac{p-2}{4} \cdot \mu^{\frac{p}{2} -2} \cdot \|u\|_p^p  < 0,
\end{equation}
since $p > 2$ and $\|u\|_p > 0$. Thus, $\mathcal{I}_\mathcal{G}$ is the infimum of a family of strictly concave functions, which implies it is strictly concave. Strict subadditivity follows from strict concavity and the fact that $\mathcal{I}_{\mathcal{G}}(0) = 0$.
\end{proof}

\begin{proposition} \label{minseq}
Any minimizing sequence $\{ u_n\} \subset H^1_{\mu}(\mathcal{G})$ is bounded. If $\{u_n\}$ is a minimizing sequence for $I_A(\cdot, \mathcal{G})$ and $u_n \rightharpoonup u $ weakly in $H^1(\mathcal{G})$, then one of the following alternatives holds:
\begin{enumerate}
    \item[(i)] $u_n \rightarrow 0$ in $L^{\infty}_{loc}(\mathcal{G})$ , which implies $u \equiv 0$;
    \item[(ii)] $u_n \rightarrow u$ strongly in $H^1(\mathcal{G}) \cap L^p(\mathcal{G})$ , and $u$ is a minimizer.
\end{enumerate}
\end{proposition}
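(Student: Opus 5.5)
We follow the concentration-compactness dichotomy of \cite{Adami2016,Adami2025}, adapted to the bounded nonnegative potential $W$ in \eqref{magneticpotential}.

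\emph{Boundedness.} Let $\{u_n\}$ be minimizing. Since $W\ge 0$, we have $E(u_n,\G)\le I_A(u_n,\G)\le C$. Combining this with \eqref{GNp} and $\|u_n\|_2^2=\mu$ gives
\[
\tfrac12\|u_n'\|_2^2\le C+\tfrac{K}{p}\,\mu^{\frac p4+\frac12}\,\|u_n'\|_2^{\frac p2-1}.
\]
Because $\frac p2-1<2$ for $p<6$, $\|u_n'\|_2$ is bounded, so $\{u_n\}$ is bounded in $H^1(\G)$. Up to a subsequence, $u_n\rightharpoonup u$ in $H^1(\G)$ and $u_n\to u$ in $L^\infty_{loc}$. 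This holds because each compact subgraph is a finite union of intervals, where $H^1$ embeds compactly into $C^0$.

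\emph{Dichotomy.} If $u\equiv0$, local uniform convergence gives alternative (i). Otherwise set $m=\|u\|_2^2\in(0,\mu]$ and $v_n=u_n-u$. Weak convergence yields the splittings
\[
\|u_n\|_2^2=\|u\|_2^2+\|v_n\|_2^2+o(1),\qquad \|u_n'\|_2^2=\|u'\|_2^2+\|v_n'\|_2^2+o(1).
\]
The Brezis--Lieb lemma gives $\|u_n\|_p^p=\|u\|_p^p+\|v_n\|_p^p+o(1)$. For the potential term, $C(\G)$ is a compact subgraph, so $v_n\to0$ uniformly on it and
\[
\int W|v_n|^2\to0,\qquad \int W|u_n|^2\to\int W|u|^2.
\]
Hence $I_A(u_n)=I_A(u)+I_A(v_n)+o(1)$, and also $I_A(v_n)=E(v_n)+o(1)$.

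\emph{Ruling out loss of mass.} Suppose $m<\mu$, so $\|v_n\|_2^2\to\mu-m>0$. Rescale to exact masses: $\tilde u=\sqrt{\mu/m}\,u\cdot\sqrt{m/\mu}$ is just a notational reminder that the natural comparisons are $I_A(u)\ge\mathcal I_\G(m)$ and, using the rescaled sequence $\sqrt{(\mu-m)}\,v_n/\|v_n\|_2$ together with the scaling $f_u$ from the concavity proof, $\liminf I_A(v_n)\ge\mathcal I_\G(\mu-m)$. This gives
\[
\mathcal I_\G(\mu)\ge\mathcal I_\G(m)+\mathcal I_\G(\mu-m).
\]
The case $m=\mu$ is excluded from this step, and $m=0$ is case (i). For $0<m<\mu$, this contradicts strict subadditivity. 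Note that strict subadditivity requires $\mathcal I_\G(\mu)<0$; this holds for graphs with a half-line, by testing with spread-out functions on the half-line. Hence $m=\mu$.

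\emph{Strong convergence.} With $m=\mu$, we get $v_n\to0$ in $L^2$. By \eqref{GNp} and the bounded $H^1$ norm, $v_n\to0$ in $L^p$. Therefore
\[
\liminf I_A(u_n)\ge I_A(u)+\liminf\tfrac12\|v_n'\|_2^2,
\]
while $I_A(u)\ge\mathcal I_\G(\mu)$. So $\|v_n'\|_2\to0$, $u_n\to u$ in $H^1\cap L^p$, and $u$ is a minimizer.

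\emph{Main obstacle.} The delicate step is the subadditivity comparison. Two things need care there: justifying that the potential term disappears for the escaping part $v_n$, which uses the compactness of $C(\G)$, and handling the mass-rescaling errors uniformly in $n$. If needed, we would instead use the scaling inequality $\mathcal I_\G(\theta m)<\theta\,\mathcal I_\G(m)$ for $\theta>1$, which follows from $f_u$, as done in \cite{Adami2016}.
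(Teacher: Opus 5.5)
Your proposal is correct and follows the same concentration--compactness route as the paper, which only sketches it by citing Proposition 3.4 of \cite{Adami2025}. The steps match: boundedness (you get it directly from \eqref{GNp} rather than via Proposition \ref{estimates}), splitting of $I_A$ along the weak limit with the potential term vanishing on the compact set $C(\G)$, exclusion of $0<m<\mu$ by strict subadditivity (correctly backed up by $\mathcal I_\G<0$ and the scaling inequality $\mathcal I_\G(\theta s)<\theta\,\mathcal I_\G(s)$), and strong convergence when $m=\mu$. The garbled ``rescale'' sentence should simply be dropped, since $I_A(u)\ge\mathcal I_\G(m)$ holds directly from $\|u\|_2^2=m$.
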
 

\begin{proof}
The boundedness of $\{u_n\}$ follows from Proposition \ref{estimates}. From the concentration-compactness principle on graphs, three behaviors are possible: vanishing, dichotomy, and compactness. Dichotomy is ruled out by the strict subadditivity of the energy level $\mathcal{I}_\mathcal{G}(\mu)$. Thus, only vanishing (i) or compactness (ii) can occur (see Proposition 3.4 in \cite{Adami2025} for details).
\end{proof}

\begin{proof}[Proof of Theorem \ref{existencecrit}]
Let $\{ u_n \} \subset H^1_{\mu}(\mathcal{G})$ be a minimizing sequence. By Proposition \ref{minseq}, we must rule out the vanishing case (i). 
If vanishing occurs, then $u_n \rightarrow 0$ uniformly on compact sets. Since the effective magnetic potential $\Phi_\gamma$ is supported on a compact set, the term $\int \Phi_\gamma |u_n|^2 \mathrm{d}x$ tends to 0. Consequently, the energy level of a vanishing sequence cannot be strictly below the threshold of the problem on the infinite line, $E(\phi_\mu, \mathbb{R})$.
However, the hypothesis given by the inequality \eqref{criterion} assumes the existence of a state $v$ with energy strictly below this threshold. This contradicts the vanishing scenario. Therefore, case (ii) must hold.
\end{proof}

This result extends the existence criterion to the case of bounded repulsive potentials defined on compact sets. In the following section, we apply this theorem to the tadpole graph.

\section{The tadpole graph}
\label{Sec6}

The tadpole graph consists of a graph made by one vertex and two edges. One edge is a loop that connects the vertex to itself, the other is a halfline, i.e. an edge with infinite length (see Figure \ref{tadpole}). 
It is known that with Kirchhoff conditions, it satisfies the existence criterion of a ground state proved in \cite{Adami2016}.
We recall that, in \cite{NCF15, NPS2015}, the authors classify all soliton solutions of the standard nonlinear Schrödinger equation on the tadpole graph, described via elliptic functions. In particular, if $p=4$ the ground state is given by the \textit{dnoidal} function on the ring and a piece of a soliton on the halfline.

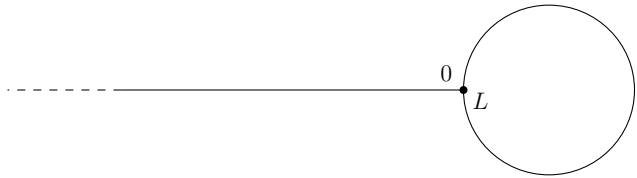
\begin{figure}[ht!]
 \centering
\begin{tikzpicture}[scale=1.5]
  \fill (0,0) circle[radius=1pt];
  \draw[dashed] (-3,0) -- (-4,0);
   \node[scale=0.75] at (0.15,-0.1) {$L$};
    \node[scale=0.75] at (-0.15,0.15) {$0$};
  \draw (-3,0) -- (0,0);
  \draw ++(0,0) arc[start angle=180, end angle=-180, radius=0.75cm];
\end{tikzpicture}
        \caption{The tadpole graph.}
        \label{tadpole}
\end{figure}
\bigskip

Using the fact that the tadpole admits a ground state for the functional $E$, we obtain the following existence results.

\begin{theorem}[Existence]
\label{existence}
    Let $\mathcal T$ be a tadpole graph and fix $\mu > 0$. If $\Phi_{\gamma}$ is sufficiently small, then there exists a ground state for $I_A(\cdot, \mathcal{T})$. 
\end{theorem}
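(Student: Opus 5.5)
The plan is to apply the existence criterion of Theorem \ref{existencecrit}. To do so, we need a competitor $v \in H^1_\mu(\mathcal T)$ with $I_A(v,\mathcal T) < E(\phi_\mu,\mathbb R)$. The natural candidate is the non-magnetic ground state on the tadpole itself.

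First, we recall from \cite{Adami2016} that, with Kirchhoff conditions, the tadpole satisfies the non-magnetic criterion (\ref{criterion1}) at every mass. Concretely, there is $w \in H^1_\mu(\mathcal T)$ with
\[
E(w,\mathcal T) < E(\phi_\mu,\mathbb R).
\]
One can take $w$ to be a ground state of $E(\cdot,\mathcal T)$, which exists. An explicit choice also works: half a soliton on the halfline, glued to a constant on the ring equal to its maximum, and then rescaled in mass. For this choice $E(w,\mathcal T) \le E(\phi_\mu,\mathbb R)$ holds strictly, because a positive amount of mass sits at zero kinetic cost. We then set
\[
\delta(\mu) \defeq E(\phi_\mu,\mathbb R) - \mathcal E_{\mathcal T}(\mu) > 0,
\]
where $\mathcal E_{\mathcal T}(\mu) \le E(w,\mathcal T)$.

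Next, we compare $I_A(w,\mathcal T)$ with $E(w,\mathcal T)$. The tadpole has a single cycle $\gamma$ (the ring of length $L$), so by Remark \ref{phiestimate}
\[
I_A(w,\mathcal T) = E(w,\mathcal T) + \Phi_\gamma(A)\int_\gamma |w|^2\,\mathrm dx \le E(w,\mathcal T) + \Phi_\gamma(A)\,\mu .
\]
The interpretation of the factor $\tfrac12$ in front of the potential term (it appears in \eqref{Ia} versus $f_u$) is irrelevant here, since only an upper bound is needed. Hence, if
\[
\Phi_\gamma(A) < \frac{\delta(\mu)}{\mu},
\]
or more sharply $\Phi_\gamma(A) < \delta(\mu)/\|w\|_{L^2(\gamma)}^2$, we obtain $I_A(w,\mathcal T) < E(\phi_\mu,\mathbb R)$. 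Theorem \ref{existencecrit} then yields a ground state of $I_A(\cdot,\mathcal T)$ at mass $\mu$. This quantifies ``sufficiently small'': the threshold depends on $\mu$ and $L$ through the energy gap $\delta(\mu)$. In terms of the flux, it means $\mathrm{dist}\bigl(\tfrac{1}{2\pi}\int_\gamma A,\mathbb Z\bigr)^2 < \tfrac{L^2\delta(\mu)}{4\pi^2\mu}$.

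The only genuinely substantive step is the strict positivity of the gap $\delta(\mu)$. I would import this from \cite{Adami2016}, where the tadpole is shown to satisfy the criterion for all masses. If a self-contained argument is wanted, the gap can be seen from the explicit competitor above. Scaling shows that both $\delta(\mu)$ and $\mu$-dependent quantities behave like powers of $\mu$ only in the regimes $\mu L^{\beta}$ small or large. Consequently the admissible smallness of $\Phi_\gamma$ is not uniform in $\mu$, which is consistent with the mass-dependent phase transition described in the abstract. Beyond that, the proof is a direct perturbation argument.
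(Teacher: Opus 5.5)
Your main argument is the paper's. You take the non-magnetic tadpole ground state as competitor, write $I_A=\mathcal E_{\mathcal T}(\mu)+\Phi_\gamma\int_\gamma|w|^2$, and apply Theorem \ref{existencecrit} once this perturbation lies below the gap $R(\mu,\mathcal T)=E(\phi_\mu,\mathbb R)-\mathcal E_{\mathcal T}(\mu)$. Both you and the paper import the positivity of this gap from the literature. The paper says it holds ``by definition of ground state'', which it does not, so you are right that this is the substantive input. Your threshold $\Phi_\gamma<\delta(\mu)/\mu$ also yields the strict inequality that Theorem \ref{existencecrit} needs, whereas the paper only writes ``$\le R$''.

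The explicit competitor you offer as a self-contained proof of $\delta(\mu)>0$ does not work. Suppose $w$ is constant on the ring and equal there to $\|w\|_\infty$. Then $\|w\|_\infty^2L\le\mu$, so $\int|w|^p\le\|w\|_\infty^{p-2}\mu\le\mu^{p/2}L^{1-p/2}$ and $E(w,\mathcal T)\ge-\frac1p\mu^{p/2}L^{1-p/2}$. On the other hand $E(\phi_\mu,\mathbb R)=-\theta_p\mu^{2\beta+1}$, and $2\beta+1-\frac p2=\frac{(p-2)^2}{2(6-p)}>0$. Hence for large $\mu$ at fixed $L$ your $w$ lies above the soliton level, however you rescale its amplitude. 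Concretely, for $p=4$ and the unrescaled half-soliton $\sqrt2\,m\sech(mx)$, the required inequality is $4+12mL>(1+mL)^3$, which is false for instance when $mL\ge 3$. The flat ring saves kinetic energy but caps the sup norm, and that destroys the nonlinear gain.

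A competitor that works for every $\mu$ and $L$ is the following. Parametrize the ring by $[-L/2,L/2]$ with both endpoints at the vertex, and put $\phi_\mu$ there. On the half-line put both tails, compressed: $w(x)=\phi_\mu(L/2+x/2)$ for $x\ge 0$. This $w$ is continuous and has the same $L^2$ and $L^p$ norms as $\phi_\mu$. The kinetic energy of the tails is divided by $4$, so $E(\phi_\mu,\mathbb R)-E(w,\mathcal T)=\frac38\int_{|x|>L/2}|\phi_\mu'|^2\,\mathrm dx>0$. Plugged directly into $I_A$, this gives existence whenever $\Phi_\gamma\mu$ is below that quantity, without even needing a non-magnetic ground state. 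Finally, if you use any non-minimizing $w$, the threshold must be $E(\phi_\mu,\mathbb R)-E(w,\mathcal T)$ rather than $\delta(\mu)$.
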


\begin{proof}
    Fix $\mu > 0$ and let $u$ be the ground state for $E(u ,\mathcal{T})$ with energy $\mathcal{E}_{\mathcal{T}}(\mu)$. Then, the energy of $u$ can be written as 
\begin{equation*}
        I(u,\mathcal T) = \mathcal{E}_{\mathcal{T}}(\mu) + \Phi_{\gamma} \int_{\gamma} |u|^2\,.
\end{equation*}
Moreover, by the definition of ground state we have $\mathcal{E}_{\mathbb{R}}(\mu)-\mathcal{E}_{\G}(\mu) := R(\mu, \G) > 0$. 
    Therefore, if $\Phi_{\gamma}$ is sufficiently small, then
    \begin{equation}
       \Phi_{\gamma} \int_{\gamma} |u|^2 \leq R(\mu, \mathcal{T}) ,
    \end{equation}
    so the condition of Theorem \ref{existencecrit} is satisfied and a ground state exists. 
\end{proof}

\begin{remark}
In the previous proof we introduced the quantity
    \begin{equation*}
       R(\mu, \G) :=  \mathcal{E}_{\mathbb{R}}(\mu)-\mathcal{E}_{\G}(\mu).
    \end{equation*}
An explicit expression of $R(\mu, \mathcal{T})$ is available only if $p=4$. 
However, since the lowest ground state on noncompacts is the one reached in the halfline $\G = \mathbb{R}^+$ \cite{Adami2015}, one always have the general bounds
\begin{equation}
    0 \leq R(\mu, \G) \leq (2^{2\beta}-1) \theta_p \mu^{2 \beta + 1}.
    \label{Restimates}
\end{equation}
The small mass behaviour is immediate
\begin{equation*}
    R(\mu,\G) \rightarrow 0 \quad \text{if} \quad \mu \rightarrow 0.
\end{equation*}

We now show that $R(\mu, \G) \rightarrow 0$ also in the opposite regime $\mu \rightarrow \infty$.
It is known \cite{Adami2016} that every solution of the equation
\begin{equation}
    u'' - |u|^{p-2} u = \lambda u
    \label{schrodinger}
\end{equation}
satisfy the scaling law 
\begin{equation}
    u_\tau(x) = \sqrt{\tau}\, u(\tau x).
\end{equation}
Hence, as $\mu \rightarrow \infty$ the corresponding solutions become increasingly localized with characteristic length $\ell \sim t^{-1}$.
Moreover, there exists $\mathcal R > \ell$ such that $u(x) \rightarrow 0$ for $|x| > \mathcal{R}$,  In the tail region the nonlinearity is negligible and the solution behaves like solution of the linear equation $u'' = \lambda u$.
In other words, the tail decays exponentially
\begin{equation}
    u(x) \leq C e^{- c |x|}  \quad \text{for} \quad x > \mathcal{R}.
\end{equation}
By scaling, 
\begin{equation}
    u_{\tau}(x) \leq C \sqrt{\tau} e^{- c \tau|x|}  \quad \text{for} \quad x > \mathcal{R}.
\end{equation}
The energy of the tail satisfies
\begin{equation}
    \epsilon = \int_{|x|> \mathcal{R}} \frac{1}{2} |u'|^2 - \frac{1}{p} |u|^p \textrm{d}x \leq C \tau^2 e^{-c\tau \mathcal{R}} 
\end{equation}
Therefore, in the large mass regime the energy of a ground state is arbitrary closed to the energy on the line.
We conclude that
$$R(\mu,\G) \rightarrow 0 \quad \text{if} \quad \mu \rightarrow \infty.$$

Since $R(\mu, \G) \rightarrow 0 $ both as $\mu \rightarrow0$ and as $\mu \rightarrow \infty$, one should not expect ground states to appear in either extreme regime since the potential acts repulsively.
\end{remark}

We finally establish a nonexistence result, showing that for sufficiently repulsive magnetic potentials no ground state can occur. 

\begin{theorem}[Nonexistence]
Let $\mathcal{T}$ be a tadpole graph with cycle length $L$, and let $\mu>0$ be the
prescribed mass.
If $\phi_\gamma(A)$ exceeds a critical value depending on $\mu$, then
$I_A(\cdot,\mathcal{T})$ does not admit a ground state.
\end{theorem}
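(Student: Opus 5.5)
The strategy is to show that, once $\Phi_\gamma(A)$ is large, the infimum $\mathcal I_{\mathcal T}(\mu)$ equals the half-line/line threshold $E(\phi_\mu,\mathbb R)$ and is not attained. The argument has three steps: a comparison bound, a localization estimate on the ring, and a contradiction from the Euler--Lagrange equation.

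\textbf{Step 1: upper bound.} Take solitons $\phi_\mu(\cdot - y)$ placed on the half-line and translated to infinity, with a cut-off near the vertex. Their energy tends to $E(\phi_\mu,\mathbb R)$, and the potential term vanishes in the limit since $W$ is supported on the compact ring. Hence
\[
\mathcal I_{\mathcal T}(\mu)\le E(\phi_\mu,\mathbb R)
\]
for every value of $\Phi_\gamma$. A ground state can therefore exist only if some $u\in H^1_\mu(\mathcal T)$ achieves $I_A(u,\mathcal T)\le E(\phi_\mu,\mathbb R)$. Since $\mathcal I_{\mathcal T}(\mu)<0$, any candidate ground state satisfies the hypothesis of Proposition~\ref{estimates}. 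By \eqref{largemu} it then obeys $\|u\|_\infty^2\le C_2\mu^{\beta+1}$, with constants independent of $\Phi_\gamma$ because $I_A\ge E$ (Remark~\ref{phiestimate}).

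\textbf{Step 2: lower bound on the ring mass.} Let $m=\int_\gamma|u|^2$. Split
\[
I_A(u,\mathcal T)=E(u,\mathcal T)+\Phi_\gamma m ,
\]
and note that $E(u,\mathcal T)\ge \mathcal E_{\mathcal T}(\mu)=E(\phi_\mu,\mathbb R)-R(\mu,\mathcal T)$. Attaining the threshold then forces
\[
\Phi_\gamma m\le R(\mu,\mathcal T)\le (2^{2\beta}-1)\theta_p\mu^{2\beta+1}
\]
by \eqref{Restimates}, so $m\to0$ as $\Phi_\gamma\to\infty$. To reach a contradiction I need a lower bound on $m$ for any genuine minimizer. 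I would use the Euler--Lagrange equation \eqref{eqdiff} with Kirchhoff conditions. On the half-line the solution is a piece of soliton with $\omega<0$. At the vertex $u(0)>0$: otherwise $u$ vanishes identically on the ring by ODE uniqueness (Kirchhoff plus $u=0$ gives zero derivative data), and it would then be a half-line function with Dirichlet-like behaviour whose energy is strictly above the line threshold, contradicting Step~1.

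\textbf{Step 3: contradiction.} On the ring, $-u''+(\Phi_\gamma-\omega)u=|u|^{p-2}u$ with $\Phi_\gamma-\omega$ huge, so $u$ is approximately $u(0)\cosh(\kappa(x-L/2))/\cosh(\kappa L/2)$ with $\kappa\approx\sqrt{\Phi_\gamma}$. The outgoing ring derivatives therefore total about $2\kappa u(0)\tanh(\kappa L/2)$. The Kirchhoff condition balances this against the half-line soliton slope, which by \eqref{largemu} is bounded by $C\mu^{\cdot}u(0)$. For $\Phi_\gamma$ beyond a critical value (explicitly, $\kappa$ larger than a power of $\mu$) this balance is impossible, since the half-line side cannot compensate a slope of order $\sqrt{\Phi_\gamma}\,u(0)$. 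A convenient way to make this rigorous is the first-integral/phase-plane relation $u'^2=(\Phi_\gamma-\omega)u^2-\tfrac2p u^p+c$ on the ring, compared with $u'^2=-\omega u^2-\tfrac2pu^p$ on the half-line at the vertex.

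The main obstacle is Step~3: controlling $\omega$ uniformly so that $\Phi_\gamma-\omega$ is genuinely large. I would bound $|\omega|$ using $\omega\mu=\|u'\|_2^2+\int W|u|^2-\|u\|_p^p$ together with Proposition~\ref{estimates}. I would then rule out the residual degenerate case, in which the minimizer concentrates entirely on the half-line and has energy strictly above $E(\phi_\mu,\mathbb R)$, by the known fact that half-line configurations cannot reach the line level (cf.\ \cite{Adami2015}).
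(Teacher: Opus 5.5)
Your argument is correct in substance, but it does not follow the paper's proof, and the difference is essential. The paper stops at what is essentially your Step~2. It shows that a ground state cannot vanish on the cycle, lets $m$ be ``the minimal mass that any ground state must carry on the cycle'', and concludes from \eqref{nonexistencecondition} that large $\phi_\gamma$ is excluded. As you observe, this does not close. The ground states, and hence $m$, depend on $\phi_\gamma$, and the inequality itself only yields $m\le R(\mu,\mathcal T)/\phi_\gamma\to0$. A contradiction would need a lower bound on the cycle mass that is uniform in $\phi_\gamma$, and the paper never supplies one.

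Your Step~3 replaces this with a pointwise argument on the Euler--Lagrange equation, and it does close with $\Phi_\gamma$-independent constants. Testing \eqref{eqdiff} with $u$ and using $I_A(u)<0$ gives $\omega<0$, so ``$\Phi_\gamma-\omega$ large'' is automatic; the bound you really need is the upper one, $|\omega|\le\|u\|_p^p/\mu\le C_2\mu^{2\beta}$. You also have $\|u\|_\infty^{p-2}\le C\mu^{2\beta}$. Hence $u''\ge k^2u$ on the ring with $k^2:=\Phi_\gamma-C\mu^{2\beta}$. The maximum principle then gives $u\le u(0)\cosh(k(x-L/2))/\cosh(kL/2)$, so each outgoing ring derivative is at most $-k\,u(0)\tanh(kL/2)$. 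The half-line first integral gives $u'(0)\le\sqrt{|\omega|}\,u(0)$. Kirchhoff therefore forces $2k\tanh(kL/2)\le\sqrt{C_2}\,\mu^{\beta}$, which is false for large $\Phi_\gamma$. This comparison is the simplest way to make your ``approximately $\cosh$'' step rigorous, and it makes the bound $\Phi_\gamma m\le R$ superfluous.

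Beyond correctness, your route gives an explicit threshold. This matters because $\Phi_\gamma\le\pi^2/|\gamma|^2$. Since the threshold tends to $0$ with $\mu$, it shows the nonexistence regime is actually reached by a magnetic flux, e.g.\ half-integer flux at small mass. The paper's implicit value $R/m$ cannot decide this. The paper's purely energetic argument would be shorter, but only if a uniform lower bound on $m$ were available.

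One small repair in Step~2. When $u(0)=0$, Kirchhoff fixes only the sum of the two outgoing ring derivatives, so ``zero derivative data'' needs $u\ge0$; this is legitimate since $I_A(|u|)=I_A(u)$. Then both ring derivatives are $\ge0$. The half-line first integral gives zero slope on the half-line, so Kirchhoff forces both ring derivatives to vanish.
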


\begin{proof}
Assume by contradiction that $I_A$ admits a ground state $u$. Then
\begin{equation}
\mathcal I_{\mathcal{T}}(\mu)
= \min_{\|v\|_2^2=\mu}
\Big\{ E_{NLS}(v,\mathcal{T}) + \int_{\gamma} \phi_\gamma v^2\,\textrm{d}x \Big\}
\;\ge\;
\mathcal{E}_{\mathcal{T}}(\mu)
+ \min_{\|v\|_2^2=\mu} \int_{\gamma} \phi_\gamma v^2\,\textrm{d}x .
\label{minmass}
\end{equation}

A ground state must place a strictly positive amount of mass $m>0$ on the cycle.
Assume by contradiction that there exists a ground state $v$ of mass $\mu$ such that $m=0$.
Then $v=0$ on the cycle and, by continuity at the vertex, $v(0)=0$, so all the mass
lies on the half-line.
Extend $v$ to the whole line by
\[
w(x) :=
\begin{cases}
v(x), & x > 0,\\[2pt]
0, & x \le 0.
\end{cases}
\]
Then $w \in H^1(\mathbb{R})$, $\|w\|_{L^2(\mathbb{R})}^2 = \mu$, and $E(w,\mathbb{R}) = E(v,\mathcal{T}).$
Since $v$ is a ground state on the tadpole $
E(w,\mathbb{R}) \le \mathcal{E}_{\mathbb{R}}(\mu).$
On the other hand, by definition of the ground-state energy on the line,
$E(w,\mathbb{R}) \ge \mathcal{E}_{\mathbb{R}}(\mu).$
Therefore
\[
E(w,\mathbb{R}) = \mathcal{E}_{\mathbb{R}}(\mu),
\]
so $w$ is a minimizer on $\mathbb{R}$ and must coincide with the soliton.
However, the soliton on $\mathbb{R}$ is strictly positive and never vanishes, this is a contradiction, hence no ground state can have
zero mass on the cycle.

Let $m>0$ denote the minimal mass that any ground state must carry on the cycle. Assume that $\phi_{\gamma} >0 $, then $\int_{\gamma} \phi_\gamma |v|^2 \geq \phi_{\gamma} m$.
Combining this with \eqref{minmass} we obtain
\begin{equation}
\phi_\gamma(A)\, m \;<\; R(\mu,\mathcal{T}),
\label{nonexistencecondition}
\end{equation}
If $\phi_\gamma(A)$ is larger than the critical value for which
\eqref{nonexistencecondition} fails, then the inequality cannot hold.  
This contradicts the existence of a ground state, and therefore no ground state can
exist in this regime.
\end{proof}

\subsection{The hyperbolic secant competitor}
\label{Sec6.1}

Having established the general existence theory, we now turn to the concrete case of the tadpole graph with $p=4$. The key step is to construct an admissible test function that beats the energy of the soliton on the real line. Drawing inspiration from the standard case, we build our competitor from hyperbolic secant profiles.  The computation yields an explicit inequality involving the magnetic potential that assure the existence of ground states, in particular this is true for an intermediate regime mass (see Figure \ref{fig:competitor2}).

\begin{lemma}
Let $\mathcal{T}$ be the tadpole graph with loop length $2L>0$, and let $\mu>0$ be the prescribed mass. Consider the nonlinear Schr\"odinger energy functional $I_A(\cdot,\mathcal{T})$  with $p=4$.
Let $m=m(\mu)>0$ be the unique solution of the mass relation
\begin{equation}
\mu = 2m\big(1+\tanh(mL)\big).
\end{equation}
If the magnetic potential satisfies
\begin{equation}
    \label{magneticexistence}
    \phi_{\gamma}(A) \;\leq\; m(\mu)^2\,\frac{1 + 3\tanh\!\big(m(\mu)L\big)}{\sinh\!\big(2m(\mu)L\big)}\,,
\end{equation}
then the energy functional on $\mathcal{T}$ admits a ground state of mass $\mu$.
\end{lemma}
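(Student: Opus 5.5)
The plan is to apply Theorem~\ref{existencecrit}: I will exhibit an explicit $v\in H^1_\mu(\mathcal T)$ with $I_A(v,\mathcal T)<E(\phi_\mu,\mathbb R)$. For $p=4$ everything can be computed in closed form. The solitons $\sqrt2\,b\,\sech(bx)$ solve $-u''-u^3=-b^2u$ on $\mathbb R$, have mass $4b$ and energy $-\tfrac23 b^3$. Hence the threshold is $E(\phi_\mu,\mathbb R)=-\mu^3/96$, obtained by taking $b=\mu/4$.

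\emph{Construction of the competitor.} Parametrize the loop of length $2L$ by $x\in[-L,L]$, with the vertex at $x=\pm L$, and the half-line by $x\in[L,\infty)$. Set
\[
v(x)=\sqrt2\,m\,\sech(mx)\quad\text{on the loop},\qquad v(x)=\sqrt2\,m\,\sech(mx)\quad\text{for } x\ge L \text{ on the half-line}.
\]
In words, the loop carries the central piece of a soliton of parameter $m$, centred at the antipode of the vertex, and the half-line carries its right tail. The function is continuous at the vertex, since both loop endpoints and the half-line take the value $\sqrt2\,m\,\sech(mL)$. Hence $v\in H^1(\mathcal T)$.

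\emph{Mass and energy.} With $t=\tanh(mL)$, use $\int 2m^2\sech^2(mx)\,dx = 2m\tanh(mx)$. The loop then carries mass $4mt$ and the half-line carries mass $2m(1-t)$, so the total is $2m(1+t)$. This is exactly the mass relation in the statement, which fixes $m=m(\mu)$; the solution is unique because the right-hand side is strictly increasing in $m$.

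The kinetic and quartic integrals of $\sech$-profiles over $[0,a]$ are explicit polynomials in $\tanh(ma)$:
\[
\int_0^a (\sech^2)\tanh^2 \sim \frac{t^3}{3},\qquad \int_0^a \sech^4\sim t-\frac{t^3}{3}.
\]
Adding the loop contribution (twice $[0,L]$) and the tail contribution ($[L,\infty)$) gives $E(v,\mathcal T)$ as an explicit expression of the form $m^3 P(t)$. The magnetic term is simply $\phi_\gamma(A)\int_\gamma v^2=\phi_\gamma(A)\,4mt$.

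\emph{Comparison with the threshold.} Substitute $\mu=2m(1+t)$ into $-\mu^3/96=-m^3(1+t)^3/12$. The criterion \eqref{criterion} then becomes
\[
\phi_\gamma(A)\,4mt \;<\; -\frac{m^3(1+t)^3}{12}-m^3P(t),
\]
that is, $\phi_\gamma(A)\le m^2\,Q(t)/(4t)$ for an explicit rational $Q$. The main obstacle is the algebra in this step. One must check that the difference of energies factors so that, after dividing by $4mt$ and using $\sinh(2mL)=2t/(1-t^2)$, the bound takes exactly the form $m^2(1+3t)/\sinh(2mL)$ claimed in \eqref{magneticexistence}. One must also check that this difference is strictly positive for every $L,m>0$, so that the admissible range of $\phi_\gamma$ is nonempty.

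I would do the computation by collecting powers of $t$ and factoring out $(1-t)$ or $(1-t^2)$. If the resulting polynomial does not match exactly, I would fall back on showing that it dominates $(1+3t)(1-t^2)t/2$. The boundary case of equality should be handled by the strictness already present in the non-magnetic inequality. Theorem~\ref{existencecrit} then yields the ground state.
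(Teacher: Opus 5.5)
Your competitor is exactly the paper's (the central piece of $\sqrt2\,m\sech(mx)$ on the loop, its tail on the half-line), and so is the route through Theorem~\ref{existencecrit}. The gap is the algebra you postpone: it does not produce the stated constant, and your fallback cannot rescue it.

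The integrand $\sech^2\xi\tanh^2\xi-\sech^4\xi$ has antiderivative $\tfrac23\tanh^3\xi-\tanh\xi$. Hence the loop contributes $2m^3(\tfrac23t^3-t)$ and the tail contributes $m^3(t-\tfrac23t^3-\tfrac13)$, so
\[
E(v,\mathcal T)=\frac{m^3}{3}\bigl(2t^3-3t-1\bigr),\qquad
E(\phi_\mu,\mathbb R)-E(v,\mathcal T)=\frac{m^3}{4}(1+3t)(1-t^2).
\]
With your magnetic term $4\phi_\gamma mt$, the criterion becomes
\[
\phi_\gamma(A)<\frac{m^2(1+3t)(1-t^2)}{16\,t}=\frac{m^2(1+3t)}{8\sinh(2mL)}.
\]
This is one eighth of the right-hand side of \eqref{magneticexistence}. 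Reaching the claimed threshold would require $\frac{m^3}{4}(1+3t)(1-t^2)\ge 2m^3(1+3t)(1-t^2)$, which is false for every $t\in(0,1)$. Since every quantity is explicit, no regrouping of powers of $t$ changes this. Your argument proves the lemma only with the bound divided by $8$. If the potential term carries the factor $\tfrac12$ inherited from $\tfrac12\int|Du|^2$, the bound is divided by $4$ instead; that is the normalization the paper's proof actually uses when it writes the magnetic contribution as $2\phi_\gamma mT$.

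The paper's proof does not close this gap either; it reaches the stated constant only through two arithmetic slips. First, from $\frac{m^3}{3}(2T^3-3T-1)+2\phi_\gamma mT\le-\frac{m^3}{12}(1+T)^3$ one gets $8\phi_\gamma T\le m^2(1+3T)(1-T^2)$, not $4\phi_\gamma T\le\cdots$. Second, even the latter would give only $\phi_\gamma\le m^2(1+3T)/(2\sinh(2mL))$, since $(1-T^2)/T=2/\sinh(2mL)$. So the lemma as written would need a genuinely better test function or a different argument, and neither you nor the paper provides one.

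A smaller point concerns the equality case. There $I_A(v,\mathcal T)=E(\phi_\mu,\mathbb R)$ exactly, so the strictness of the non-magnetic inequality does not help. The correct fix is a dichotomy. Either $\mathcal I_{\mathcal T}(\mu)=I_A(v,\mathcal T)$, in which case $v$ is itself a ground state. Or some $w$ satisfies $I_A(w,\mathcal T)<E(\phi_\mu,\mathbb R)$, and Theorem~\ref{existencecrit} applies.
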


\begin{proof}
We use the competitor $u_\mu$ constructed from hyperbolic secant profiles on the loop and on the half-line (see Figure \ref{fig:competitor}):

\[
u_\mu(x)=
\begin{cases}
\sqrt{2}\,m\,\sech\!\big(m(y+L)\big), & y\in[0,\infty),\\
\sqrt{2}\,m\,\sech(mx), & x\in[-L,L].
\end{cases}
\]

\begin{figure}[ht!]
 \centering
 \includegraphics[height = 4 cm ]{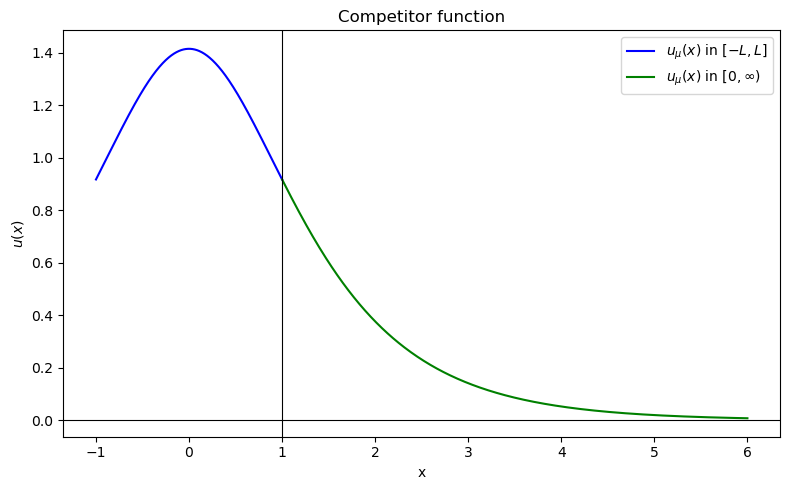}
        \caption{Competitor function $u_\mu(x)$ with $L=1$ and $\mu=1$}
        \label{fig:competitor}
\end{figure}

A direct computation using $\int \sech^2(\xi)\,d\xi=\tanh(\xi)+C$ gives the mass identity

\[
\mu = 2m^2\!\int_{-L}^L \sech^2(mx)\,\textrm{d}x
      +2m^2\!\int_0^\infty \sech^2\!\big(m(y+L)\big)\,dy
    =2m\big(1+\tanh(mL)\big),
\]

which determines $m=m(\mu)$ uniquely .

For the energy, we use the identity

\[
\int\!\left[\sech^2(\xi)\tanh^2(\xi)-\sech^4(\xi)\right]d\xi
=
\frac{2\tanh^3(\xi)-1}{3}-\tanh(\xi)+C.
\]

Evaluating on the loop and the half-line yields the nonlinear contribution

\[
E_{NLS}(u_\mu, \mathcal T)
=
\frac{1}{3}m^3\big(2T^3-3T-1\big),
\qquad T=\tanh(mL).
\]

The magnetic term contributes $\Phi_{\gamma}2mT$, so the total energy is

\[
I(u_\mu)
=
\frac{1}{3}m^3\big(2T^3-3T-1\big)+2\phi_{\gamma}mT.
\]

The soliton on $\mathbb{R}$ with the same mass $\mu$ has energy

\[
E_{\mathbb{R}}(\mu)
=
-\frac{1}{12}m^3(1+T)^3,
\]

where the same mass relation links $m$ and $\mu$.  
Requiring $I(u_\mu, \mathcal{T})\leq E_{\mathbb{R}}(\mu)$ gives

\[
4\phi_{\gamma}T \leq m^2(-3T^3-T^2+3T+1),
\]

which simplifies to

\[
\phi_{\gamma}(A) \leq 
 m^2\,\frac{1+3\tanh(mL)}{\sinh(2mL)}.
\]
Under this condition, the competitor $u_\mu$ has strictly lower energy than any soliton on $\mathbb{R}$, and for Theorem \ref{existencecrit} a ground state exists.
\end{proof}

The explicit relation \eqref{magneticexistence} obtained in the Lemma is illustrated in Figure \ref{fig:competitor2}, where the intermediate mass regime for which the inequality is satisfied is highlighted.

\begin{figure}[ht!]
 \centering
 \includegraphics[height = 4 cm ]{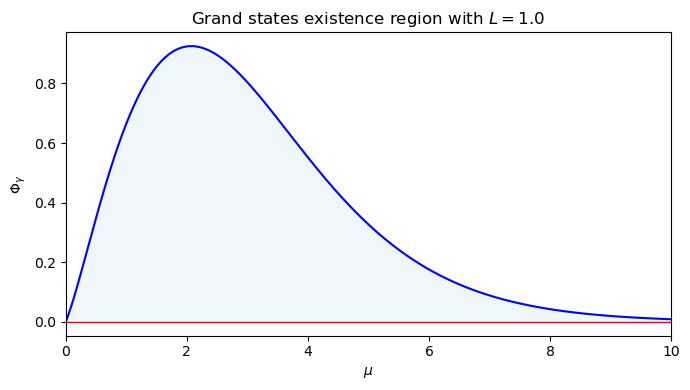}
        \caption{Parameter region in the $(\mu, \phi_\gamma)$-plane where the condition of the inequality \eqref{magneticexistence} holds with $L=1$.}
    \label{fig:competitor2}
\end{figure}

\end{document}